\documentclass[11pt,a4paper]{article}
\usepackage[margin=30mm]{geometry}
\usepackage[T1]{fontenc}
\usepackage{lmodern}
\usepackage{microtype}
\usepackage{amsmath,amssymb,amsthm,mathtools,mathrsfs}
\usepackage{enumitem}
\usepackage{booktabs}
\usepackage{array}
\usepackage[hidelinks]{hyperref}
\newtheorem{theorem}{Theorem}[section]
\newtheorem{lemma}[theorem]{Lemma}
\newtheorem{proposition}[theorem]{Proposition}
\newtheorem{corollary}[theorem]{Corollary}
\theoremstyle{definition}
\newtheorem{definition}[theorem]{Definition}
\newtheorem{example}[theorem]{Example}
\theoremstyle{remark}
\newtheorem{remark}[theorem]{Remark}

\title{\Large Gradient Ricci Solitons on PTR-Submanifolds\\
of Complex Space Forms}
\author{Jae Won Lee\\
Department of Mathematics Education and RINS, Gyeongsang National University,\\
501 Jinju-daero, Jinju-si, Gyeongsangnam-do 52828, Republic of Korea\\
\texttt{leejw@gnu.ac.kr}}
\date{}

\begin{document}
\maketitle
\vspace{-1.5em}

\begin{abstract}
We study gradient Ricci solitons on partially totally real (PTR) submanifolds
of complex space forms.  In nonzero ambient holomorphic sectional curvature,
the Gauss--soliton equation reconstructs the square $P^2$ of the tangential
K\"ahler operator from the Hessian endomorphism and an extrinsic Ricci
operator.  Consequently, the PTR tangency spectrum is encoded by intrinsic
soliton data together with the second fundamental form.  For compact minimal
PTR solitons with positive ambient curvature we obtain
$\lambda\le c(n-1)/4$, with a strict version under a relative-nullity
condition.  We also prove compact triviality criteria, an exact trace-free
anisotropy identity, a parallel-data rigidity result, and a nonexistence
theorem for compact real hypersurfaces.  A flat multi-level Gaussian model and
a nonflat Segre $CR$-product provide explicit consistency checks and identify
the limiting cases of the reconstruction formulas.
\end{abstract}

\noindent\textbf{2020 Mathematics Subject Classification.}
53C25, 53C40, 53C55.

\noindent\textbf{Key words and phrases.}
gradient Ricci soliton; PTR-submanifold; complex space form; K\"ahler
submanifold; rigidity; tangency spectrum.

\section{Introduction}

Gradient Ricci solitons connect curvature to the Hessian of a potential
function through
\[
\operatorname{Ric}+\operatorname{Hess}f=\lambda g.
\]
For submanifolds of K\"ahler manifolds, this equation naturally interacts
with the second fundamental form and with the tangential component of the
ambient complex structure.  A basic question is whether the soliton equation
merely constrains the induced metric or can also recover genuinely
submanifold-theoretic information.  The purpose of this paper is to show that,
for a broad class of partially totally real submanifolds in a nonflat complex
space form, the latter phenomenon does occur: the soliton and extrinsic data
recover the pointwise complex-tangency operator.

Partially totally real (PTR) submanifolds were introduced by Poyraz,
\c{S}ahin and Yerlikaya as a framework containing totally real, proper $CR$,
hemi-slant and pointwise hemi-slant submanifolds
\cite{PoyrazSahinYerlikaya2025}.  Related partially slant geometry was
developed in \cite{YerlikayaPoyrazSahin2025}.  The PTR tangent bundle splits
into a totally real distribution and an orthogonal complementary distribution
preserved by the tangential component $P$ of the almost complex structure.
Because $P$ is skew-adjoint, $P^2$ is self-adjoint and its eigenvalues are the
negatives of the squared pointwise tangency levels.  Thus recovering $P^2$
amounts to recovering the PTR tangency spectrum.

For general background on gradient Ricci solitons and rigidity, see Cao
\cite{Cao2010} and Petersen--Wylie \cite{PetersenWylie2009b}.  In K\"ahler
submanifold geometry, Cho--Kimura studied Ricci solitons on Lagrangian
submanifolds \cite{ChoKimura2010} and on real hypersurfaces of complex space
forms \cite{ChoKimura2009,ChoKimura2011}; see also Chen
\cite{Chen2014} for a broader submanifold perspective.  Cidre-D\'iaz and
Sanmart\'in-L\'opez recently classified homogeneous expanding Ricci-soliton
Lie subgroups of complex hyperbolic spaces
\cite{CidreDiazSanmartinLopez2026}.  Recent developments adjacent to the PTR
setting include warped-product PTR geometry \cite{YerlikayaSahin2026} and
Chen-type inequalities for partially slant submanifolds \cite{Aquib2026}.
These works motivate, but do not contain, the reconstruction problem studied
here.

The central identity of the paper expresses the Hessian endomorphism $S_f$,
the self-adjoint extrinsic Ricci contribution $\mathcal Q_h$, and $P^2$ in a
single operator equation.  When the ambient holomorphic sectional curvature
$c$ is nonzero, it can be solved for $P^2$.  This gives the bridge
\[
(S_f,\mathcal Q_h)\quad\Longrightarrow\quad P^2
\quad\Longrightarrow\quad
\text{PTR tangency spectrum}.
\]
The significance of the identity is therefore not the algebraic rearrangement
itself, but the rigidity information that follows from it.  On the totally
real block it determines a distinguished eigenspace.  In the compact minimal
case with $c>0$, a maximum-principle argument yields
$\lambda\le c(n-1)/4$ and a relative-nullity hypothesis makes the inequality
strict.  A scalar contraction gives a compact triviality criterion.  On the
ambiguous block, the reconstructed operator yields spectral separation,
endpoint detection, and an exact trace-free anisotropy formula.  After
differentiation it also gives a curvature--Hessian compatibility condition;
parallel data then force further rigidity.  Finally, the compact criterion
produces a nonexistence result for real hypersurfaces of nonflat complex space
forms.

The examples are used as checks rather than as substitutes for the general
arguments.  A flat affine PTR Gaussian soliton exhibits several nonzero
pointwise tangency levels but also shows why nonzero ambient curvature is
needed for reconstruction.  A standard nonflat Segre $CR$-product realizes
the relevant spectral endpoints and verifies the trace-free formulas, while
its constant potential illustrates the compact triviality theorem.  At
present no nonflat PTR example with nonconstant potential and several
distinct nonzero tangency levels is known to us, and this limitation is stated
explicitly in the final section.

Section~2 gives the PTR set-up and the benchmark examples.  Section~3
develops the scalar, maximum-principle, and compact rigidity consequences.
Section~4 contains the operator reconstruction, spectral results, and the
differential compatibility condition.  Section~5 records the real
hypersurface consequence, comparisons, and the remaining existence question.

\section{PTR geometry and the soliton set-up}

Let $(\widetilde M,J,g)$ be a K\"ahler manifold and $N$ an $n$-dimensional Riemannian submanifold. For $X\in TN$, write
\begin{equation}\label{eq:Jdecomp}
JX=PX+FX,
\end{equation}
where $PX$ and $FX$ are the tangential and normal components, respectively. If $\xi$ is normal, we similarly write
\[
J\xi=t\xi+\mathfrak f\xi,
\]
where $t\xi$ and $\mathfrak f\xi$ are the tangential and normal components. The original PTR notation uses $f$ for the normal component; here we write $\mathfrak f$ in order to reserve $f:N\to\mathbb R$ for the gradient Ricci soliton potential. Since $J$ is compatible with $g$,
\begin{equation}\label{eq:Pskew}
g(PX,Y)=-g(X,PY).
\end{equation}
Thus $P$ is skew-adjoint.

\begin{definition}[PTR-submanifold \cite{PoyrazSahinYerlikaya2025}]\label{def:PTR}
A submanifold $N$ is called \emph{partially totally real} if there is a totally real distribution $\mathscr H^{\perp}$ such that
\[
TN=\mathscr H^{\perp}\oplus\widetilde{\mathscr H},
\qquad
\mathscr H^{\perp}\perp\widetilde{\mathscr H},
\qquad
P(\widetilde{\mathscr H})\subseteq\widetilde{\mathscr H}.
\]
The orthogonal complementary distribution $\widetilde{\mathscr H}$ is called the ambiguous distribution.
\end{definition}

\noindent\textbf{Convention on the limiting cases.}
We follow the convention stated explicitly in Remark~1 of
Poyraz--\c{S}ahin--Yerlikaya \cite{PoyrazSahinYerlikaya2025}: totally real
submanifolds are included among PTR-submanifolds, whereas holomorphic
submanifolds are not. Accordingly, the defining totally real distribution
$\mathscr H^\perp$ is taken to have positive rank in the PTR situation used
here; the totally real limiting case $\widetilde{\mathscr H}=0$ is allowed.
Holomorphic submanifolds will therefore appear below only as external
K\"ahler comparisons, not as PTR examples.

\begin{definition}[Proper PTR-submanifold \cite{PoyrazSahinYerlikaya2025}]
A PTR-submanifold is called \emph{proper} if the ambiguous distribution
$\widetilde{\mathscr H}$ is neither totally real nor complex, slant, or
pointwise slant.
\end{definition}

\begin{remark}[A maximal choice on constant-rank regions]\label{rem:canonicalPTR}
The bare PTR condition is deliberately broad.  On an open set where
\(\operatorname{rank}P\) is constant and \(\ker P\) has positive rank,
skew-adjointness of \(P\) gives
\[
TN=\ker P\oplus(\ker P)^\perp
=\ker P\oplus\operatorname{Im}P.
\]
If \(X\in\ker P\), then the tangential component of \(JX\) vanishes, hence
\(JX\) is normal; thus \(\ker P\) is totally real.  Moreover,
\[
P(\operatorname{Im}P)\subseteq\operatorname{Im}P.
\]
Consequently, on every constant-rank region for which \(\ker P\) has
positive rank, the maximal choice
\(\mathscr H^\perp=\ker P\) and
\(\widetilde{\mathscr H}=\operatorname{Im}P\) automatically satisfies the
PTR conditions of Definition~\ref{def:PTR}.  Accordingly, the geometric
content considered here concerns the restrictions imposed by the gradient
Ricci soliton equation on this maximal splitting and on the nonzero spectrum
of \(P^2\); in the proper PTR case, this includes the nontrivial spectral
structure of the ambiguous distribution.

This maximal choice is an additional standing convention on the constant-rank
region under consideration; it is not part of the PTR definition itself.  To
remove the nonuniqueness coming from smaller totally real subdistributions,
throughout the spectral discussion we use
\[
\mathscr H^\perp=\ker P,
\qquad
\widetilde{\mathscr H}=\operatorname{Im}P=(\ker P)^\perp.
\]
Hence
\[
q=\dim\widetilde{\mathscr H}=\operatorname{rank}P
\]
is intrinsic to the immersion on each constant-rank component.  Since the
rank of a real skew-adjoint endomorphism is even, this maximal ambiguous
dimension \(q\) is automatically even.  The geometric content studied below
is therefore not the mere existence of a PTR splitting, but the restrictions
imposed by the gradient soliton equation on the kernel and on the nonzero
spectrum of \(P^2\).
\end{remark}

\begin{remark}[Basic special cases]
If $P=0$ on all of $TN$, then $N$ is totally real. If $\widetilde{\mathscr H}$ is $J$-invariant, then $P^2=-I$ on $\widetilde{\mathscr H}$, and the structure reduces to the CR case. If
\[
P^2=-\rho^2I\quad\text{on }\widetilde{\mathscr H}
\]
for a pointwise function $0<\rho<1$, then the ambiguous distribution has one pointwise slant level; in the terminology already contained in the PTR framework, this is the pointwise hemi-slant special situation. Pointwise slant immersions in complex space forms have an independent existence and uniqueness theory; see, for example, \cite{AlghanemiEtAl2021}.
\end{remark}

\begin{lemma}[Elementary spectral properties of $P^2$]\label{lem:P2basic}
On $TN$, the operator $P^2$ is self-adjoint and nonpositive. Moreover, every eigenvalue of $-P^2$ lies in $[0,1]$. Equivalently, if
\[
P^2X=-\rho^2X,
\qquad X\ne0,
\]
then $0\le \rho^2\le1$.  The rank of \(P\) is even; more precisely, every
positive eigenvalue of \(-P^2\) has even multiplicity.
\end{lemma}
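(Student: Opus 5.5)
The plan is to derive everything from the skew-adjointness \eqref{eq:Pskew} together with the orthogonal decomposition \eqref{eq:Jdecomp} and the fact that $J$ is an isometry.

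\emph{Self-adjointness and nonpositivity.} Applying \eqref{eq:Pskew} twice gives
\[
g(P^2X,Y)=-g(PX,PY)=g(X,P^2Y),
\]
so $P^2$ is self-adjoint. Taking $Y=X$ gives $g(P^2X,X)=-|PX|^2\le0$, so $P^2$ is nonpositive.

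\emph{The bound $\rho^2\le 1$.} Because $PX$ is tangent and $FX$ is normal, $|JX|^2=|PX|^2+|FX|^2$. Since $J$ is orthogonal, $|JX|^2=|X|^2$, hence $|PX|^2\le|X|^2$. Now let $P^2X=-\rho^2X$ with $X\ne0$. Then
\[
\rho^2|X|^2=-g(P^2X,X)=|PX|^2\in[0,|X|^2],
\]
which yields $0\le\rho^2\le1$.

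\emph{Even multiplicity and even rank.} Fix $\rho^2>0$ and let $E_\rho=\ker(P^2+\rho^2 I)$. Since $P$ commutes with $P^2$, $P$ preserves $E_\rho$. On $E_\rho$, set $J_\rho=\rho^{-1}P$. Then $J_\rho^2=-I$, so $J_\rho$ is a complex structure on $E_\rho$, and $\dim E_\rho$ is even. Concretely, $\det(J_\rho)^2=\det(-I)=(-1)^{\dim E_\rho}$ must be positive. Alternatively, one can pair $X$ with $PX$ and induct on orthogonal complements: these two vectors are orthogonal because $g(PX,X)=0$ by skewness, and $P$ preserves $\mathrm{span}\{X,PX\}^\perp\cap E_\rho$ by skewness. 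By the spectral theorem for the self-adjoint operator $P^2$, $TN=\ker P^2\oplus\bigoplus_{\rho>0}E_\rho$. Moreover $\ker P^2=\ker P$, since $|PX|^2=-g(P^2X,X)$. Therefore $\operatorname{rank}P=\sum_{\rho>0}\dim E_\rho$ is even.

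I expect no real obstacle here. The only point needing care is the identification $\ker P^2=\ker P$, which is what lets the multiplicities of the positive eigenvalues account for the full rank of $P$.
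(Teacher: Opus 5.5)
Your proposal is correct and follows the paper's proof essentially step for step: skew-adjointness gives self-adjointness and nonpositivity, the orthogonal splitting $JX=PX+FX$ with $\|JX\|=\|X\|$ gives $\rho^2\le1$, and the complex structure $J_\rho=\rho^{-1}P$ on each positive eigenspace gives even multiplicity. Your explicit observation that $\ker P^2=\ker P$, so that $\operatorname{rank}P=\sum_{\rho>0}\dim E_\rho$, usefully spells out the step the paper compresses into ``summing over the nonzero spectral blocks.''
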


\begin{proof}
We verify the assertions one at a time.

From
\[
g(PX,Y)=-g(X,PY)
\]
for all tangent vectors \(X,Y\), the adjoint of \(P\) satisfies
\[
P^*=-P.
\]
Hence
\[
(P^2)^*=(P^*)^2=(-P)^2=P^2,
\]
so \(P^2\) is self-adjoint.

Next, put \(Y=PX\) in the skew-adjointness relation:
\[
g(PX,PX)=-g(X,P(PX))=-g(X,P^2X).
\]
Therefore
\[
g(P^2X,X)=-\|PX\|^2\le0,
\]
which proves that \(P^2\) is nonpositive.

Now suppose
\[
P^2X=-\rho^2X,\qquad X\ne0.
\]
Taking the inner product with \(X\),
\[
g(P^2X,X)=-\rho^2\|X\|^2.
\]
Using the preceding identity,
\[
-\|PX\|^2=-\rho^2\|X\|^2,
\]
and hence
\[
\|PX\|^2=\rho^2\|X\|^2.
\]
Since
\[
JX=PX+FX
\]
is the orthogonal decomposition of \(JX\) into tangent and normal parts,
\[
\|JX\|^2=\|PX\|^2+\|FX\|^2\ge \|PX\|^2.
\]
Because \(J\) is an isometry,
\[
\|JX\|=\|X\|.
\]
Thus
\[
\rho^2\|X\|^2=\|PX\|^2\le\|X\|^2.
\]
Dividing by \(\|X\|^2>0\) gives
\[
0\le\rho^2\le1.
\]

It remains to prove the parity statement.  Let \(E_{\rho}\) be the eigenspace
of \(-P^2\) corresponding to \(\rho^2>0\).  Since \(P\) commutes with \(P^2\),
the space \(E_{\rho}\) is \(P\)-invariant.  On \(E_{\rho}\), define
\[
J_{\rho}=\frac1{\rho}P.
\]
Then
\[
J_{\rho}^2=\frac1{\rho^2}P^2=-I.
\]
Thus \(E_{\rho}\) is a complex vector space with respect to \(J_{\rho}\), so
\(\dim_{\mathbb R}E_{\rho}\) is even. Summing over the nonzero spectral
blocks shows that \(\operatorname{rank}P\) is even.
\end{proof}

\begin{remark}
At points where the multiplicities of the eigenvalues of $P^2|_{\widetilde{\mathscr H}}$ are locally constant, the ambiguous distribution admits the corresponding orthogonal local spectral decomposition. We use this only pointwise below; no integrability assumption on the eigendistributions is required.
\end{remark}

Let $h$ be the second fundamental form and
\[
H=\frac1n\operatorname{trace}h
\]
the mean-curvature vector. Define the self-adjoint operator $\mathcal Q_h$ by
\begin{equation}\label{eq:Qhdef}
g(\mathcal Q_hX,Y)
=
n\,g(H,h(X,Y))
-\sum_{i=1}^{n}g(h(X,e_i),h(Y,e_i)),
\end{equation}
where $\{e_i\}_{i=1}^n$ is a local orthonormal tangent frame. This notation merely packages the two second-fundamental-form terms appearing in the Ricci contraction.

\begin{remark}[Two immediate cases]
If $N$ is totally geodesic, then $h=0$ and $\mathcal Q_h=0$. If $N$ is minimal, then $H=0$ and
\[
g(\mathcal Q_hX,X)
=-\sum_i\|h(X,e_i)\|^2\le0.
\]
Thus in the minimal case $\mathcal Q_h$ is nonpositive in the quadratic-form sense.
\end{remark}

We now give an actual PTR gradient soliton before entering the nonflat theory.

\begin{example}[A flat proper PTR Gaussian soliton]\label{ex:flatPTR}
Work in $\mathbb C^4\cong\mathbb R^8$ with an orthonormal real frame
\[
\{e_1,Je_1,e_2,Je_2,e_3,Je_3,e_4,Je_4\}.
\]
Fix $0<\theta<\pi/2$, and define
\[
u=e_2,
\qquad
v=\cos\theta\,Je_2+\sin\theta\,e_3.
\]
Let
\[
\widetilde{\mathscr H}
=\operatorname{span}\{e_1,Je_1,u,v\},
\qquad
\mathscr H^\perp=\operatorname{span}\{e_4\},
\]
choose $x_0\in\mathbb C^4$, and let
$N=x_0+(\widetilde{\mathscr H}\oplus\mathscr H^\perp)$ be the affine
linear subspace with this direction space.
Then $J\mathscr H^\perp$ is normal. A direct projection gives
\[
Pu=\cos\theta\,v,
\qquad
Pv=-\cos\theta\,u.
\]
Hence
\[
P^2=-I
\quad\text{on }\operatorname{span}\{e_1,Je_1\},
\qquad
P^2=-\cos^2\theta\,I
\quad\text{on }\operatorname{span}\{u,v\}.
\]
Thus the ambiguous distribution has two distinct nonzero tangency levels; in particular this is a genuine multi-level proper PTR model.

Because $N$ is affine linear, $h=0$ and $\operatorname{Ric}=0$. Fix
$\lambda\in\mathbb R\setminus\{0\}$ and define
\[
f(x)=\frac{\lambda}{2}\|x-x_0\|^2,\qquad x\in N,
\]
where $\|\cdot\|$ is the Euclidean norm induced on the affine subspace.
Then $\nabla f=\lambda(x-x_0)$ under the canonical identification with the
direction space and $\operatorname{Hess}f=\lambda g$. Therefore
\[
\operatorname{Ric}+\operatorname{Hess}f=\lambda g.
\]
This gives a proper PTR gradient Ricci soliton with nonconstant potential in the flat case. Notice that the example also explains why the nonflat assumption will matter later: when $c=0$, the soliton equation does not directly detect the different $P^2$-levels.
\end{example}

\begin{example}[A nonflat PTR Einstein soliton from a standard Segre \(CR\)-product]
\label{ex:SegrePTR}
Consider the standard Segre \(CR\)-product
\[
\Phi:
\mathbb{CP}^{1}(4)\times\mathbb{RP}^{5}(1)
\longrightarrow
\mathbb{CP}^{11}(4),
\]
obtained by composing
\[
\mathbb{CP}^{1}(4)\times\mathbb{RP}^{5}(1)
\hookrightarrow
\mathbb{CP}^{1}(4)\times\mathbb{CP}^{5}(4)
\]
with the Segre embedding \(S_{1,5}\). Standard \(CR\)-products of this form
were studied by Chen \cite{Chen1981b}.

Let
\[
\mathscr D=T\mathbb{CP}^{1},
\qquad
\mathscr D^\perp=T\mathbb{RP}^{5}.
\]
Then \(\mathscr D\) is \(J\)-invariant and \(\mathscr D^\perp\) is totally
real.  With the canonical maximal decomposition of
Remark~\ref{rem:canonicalPTR},
\[
\widetilde{\mathscr H}=\mathscr D,
\qquad
\mathscr H^\perp=\mathscr D^\perp,
\]
and hence
\[
P^2=-I\quad\text{on }\mathscr D,
\qquad
P=0\quad\text{on }\mathscr D^\perp.
\]
Since \(\dim_{\mathbb R}\mathscr D=2\),
\[
n=7,\qquad \|P\|^2=2.
\]

The induced metric is the product metric.  The factors satisfy
\[
\operatorname{Ric}_{\mathbb{CP}^{1}(4)}=4g,
\qquad
\operatorname{Ric}_{\mathbb{RP}^{5}(1)}=4g,
\]
so
\[
\operatorname{Ric}_N=4g.
\]
Taking \(f\) constant gives
\[
\operatorname{Hess}f=0,
\qquad
\operatorname{Ric}+\operatorname{Hess}f=4g.
\]
Thus this is a nonflat, non-totally-geodesic PTR gradient Ricci soliton (in the
$CR$ special case, hence not a proper PTR example) with
\(\lambda=4\), but the soliton is \emph{trivial} in the standard sense because
\(f\) is constant.  It is therefore a benchmark for the nonflat identities,
not an example of a nontrivial nonflat soliton.
\end{example}

We reserve $R$ and $\widetilde R$ for the curvature tensors of $N$ and
$\widetilde M$, respectively, and write $\operatorname{Scal}_N$ for the
scalar curvature of $N$.

Assume henceforth that the ambient manifold is a complex space form $\widetilde M(c)$ of constant holomorphic sectional curvature $c$, with curvature tensor
\begin{align}\label{eq:ambientcurv}
\widetilde R(U,V)W
=\frac c4\{&g(V,W)U-g(U,W)V
+g(JV,W)JU \notag\\
&-g(JU,W)JV+2g(U,JV)JW\}.
\end{align}
We use the Gauss equation in the form
\begin{align}\label{eq:Gauss}
g(R(X,Y)Z,W)
={}&g(\widetilde R(X,Y)Z,W) \notag\\
&+g(h(X,W),h(Y,Z))-g(h(X,Z),h(Y,W)).
\end{align}

Finally, a gradient Ricci soliton is a quadruple $(N,g,f,\lambda)$, where
$f\in C^\infty(N)$ and $\lambda\in\mathbb R$ is a constant, satisfying
\begin{equation}\label{eq:soliton}
\operatorname{Ric}+\operatorname{Hess}f=\lambda g.
\end{equation}
Thus we use the standard gradient Ricci soliton equation, not the almost Ricci
soliton equation in which the coefficient of $g$ may vary.
We denote by $S_f$ the Hessian endomorphism,
\[
g(S_fX,Y)=\operatorname{Hess}f(X,Y).
\]

The following contraction is the standard Gauss contraction for a real
submanifold of a complex space form; compare, for example,
Matsumoto--Mihai--Tazawa \cite{MatsumotoMihaiTazawa2003}.  We include the
calculation because the sign and the coefficient of \(P^2\) are used
throughout the paper.

The following contraction will be used in both the scalar and operator arguments.

\begin{lemma}[Ricci contraction]\label{lem:Ricci}
For tangent vector fields $X,Y$ on a PTR-submanifold of $\widetilde M(c)$,
\begin{equation}\label{eq:Ricci}
\operatorname{Ric}(X,Y)
=
\frac c4\bigl\{(n-1)g(X,Y)-3g(P^2X,Y)\bigr\}
+g(\mathcal Q_hX,Y).
\end{equation}
\end{lemma}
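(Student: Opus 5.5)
We obtain \eqref{eq:Ricci} by contracting the Gauss equation \eqref{eq:Gauss} over a local orthonormal frame $\{e_i\}_{i=1}^n$ of $TN$, using the explicit curvature tensor \eqref{eq:ambientcurv}. We use $\operatorname{Ric}(X,Y)=\sum_i g(R(e_i,X)Y,e_i)$. The PTR structure plays no role beyond the decomposition \eqref{eq:Jdecomp} and the skew-adjointness \eqref{eq:Pskew}, so the identity is really one for arbitrary submanifolds of $\widetilde M(c)$.

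For the ambient part, substitute $U=e_i$, $V=X$, $W=Y$ into \eqref{eq:ambientcurv}, pair with $e_i$, and sum term by term. The first two terms give $n\,g(X,Y)-g(X,Y)=(n-1)g(X,Y)$. For the $J$-terms, the only tangential information needed is that $g(JU,V)=g(PU,V)$ whenever $U,V$ are tangent, since $FU$ is normal. Accordingly:
\begin{itemize}
\item $g(JX,Y)\,g(Je_i,e_i)=g(JX,Y)\,g(Pe_i,e_i)=0$, because $P$ is skew-adjoint;
\item $-g(Je_i,Y)\,g(JX,e_i)=-\sum_i g(Pe_i,Y)\,g(PX,e_i)=\sum_i g(e_i,PY)\,g(PX,e_i)=g(PX,PY)=-g(P^2X,Y)$;
\item $2g(e_i,JX)\,g(JY,e_i)=2\sum_i g(e_i,PX)\,g(PY,e_i)=2g(PX,PY)=-2g(P^2X,Y)$.
\end{itemize}
The ambient contribution is therefore $\frac c4\{(n-1)g(X,Y)-3g(P^2X,Y)\}$.

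For the extrinsic part, the Gauss terms with $(e_i,X,Y,e_i)$ give
\[
\sum_i\bigl\{g(h(e_i,e_i),h(X,Y))-g(h(e_i,Y),h(X,e_i))\bigr\}
= n\,g(H,h(X,Y))-\sum_i g(h(X,e_i),h(Y,e_i)).
\]
This equals $g(\mathcal Q_hX,Y)$ by \eqref{eq:Qhdef}, using the symmetry of $h$.

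The only delicate point is sign bookkeeping. The slot convention in \eqref{eq:Gauss} must match the contraction used for $\operatorname{Ric}$: the paper's Gauss equation is written with $g(h(X,W),h(Y,Z))$ positive, which is consistent with $g(R(X,Y)Z,W)$ being the sectional-curvature convention with $R(X,Y)Y$ paired against $X$. The second sign to watch is the passage from $g(Pe_i,Y)$ to $-g(e_i,PY)$. As a sanity check, take $X=Y$ unit in a complex direction with $P^2X=-X$ in $\mathbb{CP}^n$ with $h=0$: the formula gives $\frac c4(n-1+3)=\frac c4(n+2)$, which is the correct value $\frac{c}{4}(2m+2)$ for $n=2m$.
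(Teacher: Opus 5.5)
Your proposal is correct and follows the paper's proof essentially line by line. You contract the Gauss equation with $\operatorname{Ric}(X,Y)=\sum_i g(R(e_i,X)Y,e_i)$, evaluate the five ambient terms to get $(n-1)g(X,Y)+3g(PX,PY)=(n-1)g(X,Y)-3g(P^2X,Y)$, and identify the second-fundamental-form terms with $g(\mathcal Q_hX,Y)$ via \eqref{eq:Qhdef}. Your side remarks are accurate but not needed: the PTR splitting is never used, and the $\mathbb{CP}^m$ check gives $\frac c4(n+2)$.
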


\begin{proof}
Let \(\{e_1,\ldots,e_n\}\) be a local orthonormal tangent frame. With our convention,
\[
\operatorname{Ric}(X,Y)
=
\sum_{i=1}^{n}g(R(e_i,X)Y,e_i).
\]
Apply the Gauss equation to each summand:
\begin{align*}
\operatorname{Ric}(X,Y)
={}&
\sum_{i=1}^{n}g(\widetilde R(e_i,X)Y,e_i)\\
&+\sum_{i=1}^{n}g(h(e_i,e_i),h(X,Y))\\
&-\sum_{i=1}^{n}g(h(e_i,Y),h(X,e_i)).
\end{align*}
Since
\[
\sum_{i=1}^{n}h(e_i,e_i)=nH
\]
and \(h\) is symmetric, the last two lines equal
\[
n\,g(H,h(X,Y))
-\sum_{i=1}^{n}g(h(X,e_i),h(Y,e_i))
=
g(\mathcal Q_hX,Y).
\]

It remains to contract the ambient curvature. Substitute
\[
U=e_i,\qquad V=X,\qquad W=Y
\]
into \eqref{eq:ambientcurv}:
\begin{align*}
\widetilde R(e_i,X)Y
=\frac c4\{&
g(X,Y)e_i-g(e_i,Y)X
+g(JX,Y)Je_i\\
&-g(Je_i,Y)JX
+2g(e_i,JX)JY\}.
\end{align*}
Taking the inner product with \(e_i\) and summing, the first term gives
\[
n\,g(X,Y),
\]
while the second gives
\[
-\sum_i g(e_i,Y)g(X,e_i)=-g(X,Y).
\]
Thus these two terms contribute
\[
(n-1)g(X,Y).
\]
The third term vanishes because
\[
g(Je_i,e_i)=0.
\]

For the fourth term,
\begin{align*}
-\sum_i g(Je_i,Y)g(JX,e_i)
&=\sum_i g(e_i,JY)g(JX,e_i)\\
&=\sum_i g(e_i,PY)g(PX,e_i)\\
&=g(PX,PY).
\end{align*}
For the fifth term,
\begin{align*}
2\sum_i g(e_i,JX)g(JY,e_i)
&=2\sum_i g(e_i,PX)g(PY,e_i)\\
&=2g(PX,PY).
\end{align*}
Hence the Kähler contribution is
\[
3g(PX,PY).
\]
Using \(P^*=-P\),
\[
g(PX,PY)
=-g(X,P^2Y)
=-g(P^2X,Y),
\]
because \(P^2\) is self-adjoint. Therefore
\[
\sum_i g(\widetilde R(e_i,X)Y,e_i)
=
\frac c4\bigl\{(n-1)g(X,Y)-3g(P^2X,Y)\bigr\}.
\]
Adding the extrinsic term \(g(\mathcal Q_hX,Y)\) proves \eqref{eq:Ricci}.
\end{proof}

\section{Scalar soliton identities and global consequences}\label{sec:scalar}

Before using the full operator information, it is useful to record what survives after taking the trace. This follows the standard soliton--submanifold practice of combining the soliton equation with the Gauss equation and then separating intrinsic and extrinsic terms; compare the Lagrangian Ricci-soliton setting of Cho--Kimura \cite{ChoKimura2010}. The PTR case has an additional term, namely the total tangency energy $\|P\|^2$.

\begin{proposition}[Scalar PTR soliton identity]\label{prop:scalar}
Let $N^n$ be a PTR-submanifold of a complex space form $\widetilde M(c)$ carrying a gradient Ricci soliton. Then
\begin{equation}\label{eq:scalarPTR}
\Delta f+n^2\|H\|^2-\|h\|^2
=
n\lambda-\frac{cn(n-1)}4-\frac{3c}{4}\|P\|^2.
\end{equation}
Here $\|P\|^2=\sum_i\|Pe_i\|^2$ and $\|h\|^2=\sum_{i,j}\|h(e_i,e_j)\|^2$.
\end{proposition}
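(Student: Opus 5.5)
The plan is to take the trace of the soliton equation \eqref{eq:soliton} in a local orthonormal frame $\{e_i\}_{i=1}^n$ and then substitute the traced form of Lemma~\ref{lem:Ricci}. Tracing \eqref{eq:soliton} gives
\[
\operatorname{Scal}_N+\Delta f=n\lambda,
\]
where we use the convention $\Delta f=\operatorname{trace}\operatorname{Hess}f=\operatorname{trace}S_f$. This is the sign convention under which \eqref{eq:scalarPTR} holds as stated, and it should be fixed explicitly at the start.

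Next we compute $\operatorname{Scal}_N$ by setting $X=Y=e_i$ in \eqref{eq:Ricci} and summing over $i$. There are three terms to evaluate.
\begin{itemize}
\item The metric term gives $\frac c4 n(n-1)$.
\item For the $P^2$ term, Lemma~\ref{lem:P2basic} gives $g(P^2e_i,e_i)=-\|Pe_i\|^2$, so $-3\sum_i g(P^2e_i,e_i)=3\|P\|^2$.
\item For the extrinsic term, the definition \eqref{eq:Qhdef} gives
\[
\operatorname{trace}\mathcal Q_h=n\,g\Bigl(H,\sum_i h(e_i,e_i)\Bigr)-\sum_{i,j}\|h(e_i,e_j)\|^2=n^2\|H\|^2-\|h\|^2,
\]
using $\sum_i h(e_i,e_i)=nH$.
\end{itemize}
Together these give
\[
\operatorname{Scal}_N=\frac{cn(n-1)}4+\frac{3c}4\|P\|^2+n^2\|H\|^2-\|h\|^2 .
\]

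Substituting this into the traced soliton equation and moving the curvature terms to the right-hand side yields \eqref{eq:scalarPTR}. No step is really an obstacle. The only points needing care are the sign convention for $\Delta$ and the sign of the $P^2$ contribution, and both are settled by the earlier lemmas. The PTR splitting itself plays no role beyond the fact that $\|P\|^2$ is frame-independent.
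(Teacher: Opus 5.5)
Your proposal is correct and follows essentially the same route as the paper: trace Lemma~\ref{lem:Ricci} using $\operatorname{tr}P^2=-\|P\|^2$ and $\operatorname{tr}\mathcal Q_h=n^2\|H\|^2-\|h\|^2$ to obtain $\operatorname{Scal}_N$, then substitute into the traced soliton equation $\operatorname{Scal}_N+\Delta f=n\lambda$. You also use the same convention as the paper, $\Delta f=\operatorname{tr}\operatorname{Hess}f$.
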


\begin{proof}
Let \(\{e_1,\ldots,e_n\}\) be a local orthonormal tangent frame. By definition,
\[
\operatorname{Scal}_N=\sum_{j=1}^{n}\operatorname{Ric}(e_j,e_j).
\]
Using Lemma~\ref{lem:Ricci},
\begin{align*}
\operatorname{Scal}_N
&=\sum_{j=1}^{n}
\left[
\frac c4\{(n-1)g(e_j,e_j)-3g(P^2e_j,e_j)\}
+g(\mathcal Q_he_j,e_j)
\right]\\
&=\frac c4
\left[
n(n-1)-3\sum_{j=1}^{n}g(P^2e_j,e_j)
\right]
+\sum_{j=1}^{n}g(\mathcal Q_he_j,e_j).
\end{align*}
Hence
\[
\operatorname{Scal}_N=\frac c4\{n(n-1)-3\operatorname{tr}P^2\}
+\operatorname{tr}\mathcal Q_h.
\]

We compute the two traces separately. Since \(P\) is skew-adjoint,
\[
g(P^2e_j,e_j)=-g(Pe_j,Pe_j)=-\|Pe_j\|^2.
\]
Therefore
\[
\operatorname{tr}P^2
=\sum_{j=1}^{n}g(P^2e_j,e_j)
=-\sum_{j=1}^{n}\|Pe_j\|^2
=-\|P\|^2.
\]

From the definition of \(\mathcal Q_h\),
\begin{align*}
\operatorname{tr}\mathcal Q_h
&=\sum_{j=1}^{n}g(\mathcal Q_he_j,e_j)\\
&=\sum_{j=1}^{n}\left[
n\,g(H,h(e_j,e_j))
-\sum_{i=1}^{n}\|h(e_j,e_i)\|^2
\right]\\
&=n\,g\left(H,\sum_{j=1}^{n}h(e_j,e_j)\right)
-\sum_{i,j=1}^{n}\|h(e_i,e_j)\|^2.
\end{align*}
Since
\[
H=\frac1n\sum_{j=1}^{n}h(e_j,e_j),
\]
we have
\[
\sum_{j=1}^{n}h(e_j,e_j)=nH.
\]
Thus
\[
\operatorname{tr}\mathcal Q_h
=n\,g(H,nH)-\|h\|^2
=n^2\|H\|^2-\|h\|^2.
\]
Consequently,
\[
\operatorname{Scal}_N=
\frac{cn(n-1)}4
+\frac{3c}{4}\|P\|^2
+n^2\|H\|^2-\|h\|^2.
\]

Finally, trace the soliton equation
\[
\operatorname{Ric}+\operatorname{Hess}f=\lambda g.
\]
The three traces are
\[
\operatorname{tr}(\operatorname{Ric})=\operatorname{Scal}_N,\qquad
\operatorname{tr}(\operatorname{Hess}f)=\Delta f,\qquad
\operatorname{tr}(\lambda g)=n\lambda.
\]
Hence
\[
\operatorname{Scal}_N+\Delta f=n\lambda.
\]
Substitution of the expression for \(\operatorname{Scal}_N\) gives
\[
\Delta f+n^2\|H\|^2-\|h\|^2
=
n\lambda-\frac{cn(n-1)}4-\frac{3c}{4}\|P\|^2.
\]
\end{proof}

\begin{example}[Verification on the flat Gaussian PTR model]\label{ex:scalarflat}
For the affine proper PTR submanifold of Example~\ref{ex:flatPTR}, the ambient curvature and second fundamental form vanish:
\[
c=0,\qquad h=0,\qquad H=0,\qquad \mathcal Q_h=0.
\]
The Gaussian potential is $f(x)=\frac{\lambda}{2}\|x-x_0\|^2$, so
\[
\Delta f=n\lambda.
\]
Hence both sides of \eqref{eq:scalarPTR} are exactly $n\lambda$.  This verifies the scalar identity on an actual nonconstant gradient Ricci soliton and also shows why the PTR tangency energy disappears from the trace formula in the flat case.
\end{example}

\begin{example}[Scalar check on the Segre model]\label{ex:SegreScalar}
Continue Example~\ref{ex:SegrePTR}.  For the standard Segre $CR$-product, the pure factor components of the second fundamental form vanish; see Chen \cite{Chen1981b}. Hence
the pure components of \(h\) vanish and only mixed components remain.  For
orthonormal frames
\[
\{X_1,X_2\}\subset\mathscr D,
\qquad
\{Z_1,\ldots,Z_5\}\subset\mathscr D^\perp,
\]
consider a mixed plane \(\operatorname{span}\{X_i,Z_a\}\). Its sectional
curvature in the product metric is zero, because in a Riemannian product a
$2$-plane spanned by vectors tangent to two different factors has zero mixed
sectional curvature. Since \(JZ_a\) is normal, the ambient
complex-space-form contribution is \(c/4=1\). The Gauss equation gives
\[
0=1-\|h(X_i,Z_a)\|^2,
\]
so
\[
\|h(X_i,Z_a)\|=1.
\]
Thus \(H=0\), and with the ordered-pair Hilbert--Schmidt convention,
\[
\|h\|^2
=
2\sum_{i=1}^{2}\sum_{a=1}^{5}\|h(X_i,Z_a)\|^2
=20.
\]
Since \(c=4\), \(n=7\), \(\lambda=4\), \(f\) is constant, and
\(\|P\|^2=2\), the two sides of \eqref{eq:scalarPTR} are
\[
-20
\]
and
\[
7\cdot4-\frac{4\cdot7\cdot6}{4}
-\frac{3\cdot4}{4}\cdot2
=28-42-6=-20,
\]
respectively.
\end{example}

\begin{corollary}[Compact integral identity]\label{cor:compact}
If $N$ is compact without boundary, then
\begin{equation}\label{eq:compact}
 n\lambda\operatorname{Vol}(N)
=
\frac{cn(n-1)}4\operatorname{Vol}(N)
+\frac{3c}{4}\int_N\|P\|^2\,dV
+\int_N\bigl(n^2\|H\|^2-\|h\|^2\bigr)\,dV.
\end{equation}
\end{corollary}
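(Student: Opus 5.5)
The plan is to integrate the pointwise scalar identity \eqref{eq:scalarPTR} of Proposition~\ref{prop:scalar} over $N$ and to remove the Laplacian term with the divergence theorem. Every quantity in \eqref{eq:scalarPTR} is a globally defined smooth function on $N$, independent of the local frame used in its proof. Concretely, $\Delta f$ is the trace of $\operatorname{Hess}f$. The norms $\|P\|^2$, $\|h\|^2$ and $\|H\|^2$ are traces or Hilbert--Schmidt norms of tensors and so do not depend on the orthonormal frame. Hence the identity holds everywhere on $N$ as an equality of continuous functions, and since $N$ is compact each term is integrable with respect to $dV$.

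The first step is to integrate \eqref{eq:scalarPTR} term by term:
\[
\int_N\Delta f\,dV+\int_N\bigl(n^2\|H\|^2-\|h\|^2\bigr)\,dV
= n\lambda\operatorname{Vol}(N)-\frac{cn(n-1)}4\operatorname{Vol}(N)-\frac{3c}{4}\int_N\|P\|^2\,dV .
\]
Here we use that $\lambda$, $c$ and $n$ are constants. The constancy of $\lambda$ is exactly why the paper insists on the standard soliton equation rather than the almost Ricci soliton version.

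The second step is to note that $\Delta f=\operatorname{div}(\nabla f)$. Since $N$ is compact without boundary, the divergence theorem gives $\int_N\Delta f\,dV=0$. Moving the remaining terms across then yields \eqref{eq:compact} directly. If $N$ is nonorientable, we instead use the Riemannian density, or pass to the orientable double cover, which doubles both sides; the divergence theorem still applies in either case.

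There is no genuine obstacle in this argument. The only point needing care is the global well-definedness and smoothness of the integrands. In particular, $\|P\|^2$ must make sense on all of $N$ and not only on the constant-rank regions of Remark~\ref{rem:canonicalPTR}. It does, because $\|P\|^2=-\operatorname{tr}P^2$ is defined from the tangential projection of $J$ alone, with no reference to the PTR splitting.
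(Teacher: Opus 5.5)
Your proposal is correct and matches the paper's proof exactly: integrate the scalar identity \eqref{eq:scalarPTR} over $N$, remove $\int_N\Delta f\,dV$ with the divergence theorem, and rearrange. Your remarks on the frame-independence of the integrands and on the nonorientable case are harmless refinements that the paper leaves implicit.
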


\begin{proof}
Integrate the scalar identity over \(N\):
\begin{align*}
&\int_N\Delta f\,dV
+n^2\int_N\|H\|^2\,dV
-\int_N\|h\|^2\,dV\\
&\qquad=
n\lambda\operatorname{Vol}(N)
-\frac{cn(n-1)}4\operatorname{Vol}(N)
-\frac{3c}{4}\int_N\|P\|^2\,dV.
\end{align*}
Because \(N\) is compact and has no boundary,
\[
\int_N\Delta f\,dV
=
\int_N\operatorname{div}(\nabla f)\,dV
=0
\]
by the divergence theorem. Moving the remaining terms to the right-hand side gives exactly \eqref{eq:compact}.
\end{proof}

\begin{theorem}[Maximum-principle bound for compact minimal PTR solitons]\label{thm:maxbound}
Let $N^n$ be a compact minimal PTR-submanifold of a complex space form
$\widetilde M(c)$, $c>0$, carrying a gradient Ricci soliton. Then
\begin{equation}\label{eq:properbound}
\boxed{\lambda\le\frac{c(n-1)}4.}
\end{equation}
The bound follows from the following pointwise estimate. Put
\[
a=\lambda-\frac{c(n-1)}4.
\]
If $p$ is a maximum point of $f$ and $X_\alpha\in T_pN$ is a unit eigenvector
of $-P^2$ with eigenvalue $\rho_\alpha^2$, then
\begin{equation}\label{eq:maxpoint}
a\le \frac{3c}{4}\rho_\alpha^2
-\sum_{i=1}^n\|h(X_\alpha,e_i)\|^2
\le\frac{3c}{4}\rho_\alpha^2.
\end{equation}
Thus \eqref{eq:maxpoint} gives the directionwise refinement, including the
second-fundamental-form contribution, while \eqref{eq:properbound} is the
global PTR bound under the convention adopted after Definition~\ref{def:PTR}.
\end{theorem}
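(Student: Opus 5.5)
The plan is to evaluate the soliton equation \eqref{eq:soliton} at a maximum point of the potential and read off the inequality from Lemma~\ref{lem:Ricci}.

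First, since $N$ is compact without boundary, $f$ attains its maximum at some $p\in N$. At a maximum of a smooth function the Hessian is negative semidefinite, so $\operatorname{Hess}f(X,X)\le0$ for every $X\in T_pN$. Now take a unit eigenvector $X_\alpha$ of $-P^2$ at $p$ with eigenvalue $\rho_\alpha^2$. By Lemma~\ref{lem:P2basic} such eigenvectors exist and $\rho_\alpha^2\in[0,1]$. Evaluating \eqref{eq:soliton} on $(X_\alpha,X_\alpha)$ gives
\[
\operatorname{Ric}(X_\alpha,X_\alpha)=\lambda-\operatorname{Hess}f(X_\alpha,X_\alpha)\ge\lambda .
\]

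Second, I expand the left-hand side with \eqref{eq:Ricci}. Since $g(P^2X_\alpha,X_\alpha)=-\rho_\alpha^2$, the ambient part equals $\frac c4\{(n-1)+3\rho_\alpha^2\}$. Minimality gives $H=0$, so by \eqref{eq:Qhdef}
\[
g(\mathcal Q_hX_\alpha,X_\alpha)=-\sum_i\|h(X_\alpha,e_i)\|^2 .
\]
Combining these,
\[
\lambda\le\frac{c(n-1)}4+\frac{3c}4\rho_\alpha^2-\sum_i\|h(X_\alpha,e_i)\|^2 .
\]
Subtracting $c(n-1)/4$ yields the first inequality of \eqref{eq:maxpoint}. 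The second inequality is immediate because the $h$-sum is nonnegative. No sign of $c$ is used up to this point.

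Third, for the global bound \eqref{eq:properbound} I choose the direction so that $\rho_\alpha=0$. By the convention after Definition~\ref{def:PTR}, the totally real distribution $\mathscr H^\perp$ has positive rank, so I can pick a unit vector $X\in\mathscr H^\perp_p$. Since $\mathscr H^\perp$ is totally real, $JX$ is normal, hence $PX=0$ and $X$ is an eigenvector of $-P^2$ with eigenvalue $0$. With the maximal choice of Remark~\ref{rem:canonicalPTR} this is simply $\ker P\neq0$ at $p$, but the argument needs only total reality. Applying \eqref{eq:maxpoint} to $X$ gives
\[
a\le-\sum_i\|h(X,e_i)\|^2\le0,
\]
which is exactly $\lambda\le c(n-1)/4$.

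The main obstacle is this last step, namely guaranteeing a kernel direction of $P$ at the specific point $p$. That is precisely why the positive-rank convention on $\mathscr H^\perp$ is invoked, and why holomorphic submanifolds, where $\ker P=0$, are excluded.

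The hypothesis $c>0$ plays no role in the inequality itself. I would note that it is used only to interpret the bound, and that in the $\rho_\alpha>0$ directions the term $\frac{3c}4\rho_\alpha^2$ only weakens the estimate when $c>0$, which is why the kernel direction is the optimal choice.
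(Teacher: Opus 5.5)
Your proposal is correct and is essentially the paper's own argument. At a maximum point of $f$ you use $\operatorname{Hess}f\le0$ and expand $\operatorname{Ric}(X_\alpha,X_\alpha)$ via Lemma~\ref{lem:Ricci} with $H=0$; the paper phrases this through the operator identity $S_f+\mathcal Q_h=aI+\frac{3c}{4}P^2$, which is the same computation. You then apply the estimate to a unit vector of $\mathscr H^\perp_p\subset\ker P_p$ supplied by the positive-rank convention, and your remark that $c>0$ is never used in deriving either inequality is accurate for the paper's proof as well.
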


\begin{proof}
At a maximum point $p$ of $f$, $\operatorname{Hess}f\le0$.  From
Lemma~\ref{lem:Ricci} and the soliton equation,
\[
S_f+\mathcal Q_h
=
\left(\lambda-\frac{c(n-1)}4\right)I+\frac{3c}{4}P^2.
\]
Since $H=0$,
\[
g(\mathcal Q_hX,X)=-\sum_i\|h(X,e_i)\|^2.
\]
For $P^2X_\alpha=-\rho_\alpha^2X_\alpha$,
\[
\operatorname{Hess}f(X_\alpha,X_\alpha)
=
a-\frac{3c}{4}\rho_\alpha^2
+\sum_i\|h(X_\alpha,e_i)\|^2\le0,
\]
which proves \eqref{eq:maxpoint}.

By the PTR convention used in this paper, $\mathscr H^\perp$ has positive
rank.  Since it is totally real, $\mathscr H^\perp_p\subset\ker P_p$ at every
point.  Choose a unit vector $X\in\mathscr H^\perp_p$.  Then $PX=0$, so
\eqref{eq:maxpoint} with $\rho^2=0$ gives
\[
\lambda-\frac{c(n-1)}4
\le -\sum_i\|h(X,e_i)\|^2\le0.
\]
This is exactly \eqref{eq:properbound}.
\end{proof}

\begin{remark}[Why no separate $\rho_{\min}$ bound is needed]\label{rem:rhomin}
If one works with a broader formal splitting that allows the totally real
block to vanish, the directionwise estimate \eqref{eq:maxpoint} would imply
\[
\lambda\le\frac{c(n-1)}4+\frac{3c}{4}\rho_{\min}^2(p).
\]
Under the PTR convention adopted here, however, $\mathscr H^\perp_p\ne0$ and
$\mathscr H^\perp_p\subset\ker P_p$ at every point, so
$\rho_{\min}^2(p)=0$.  Hence the displayed inequality reduces identically to
\eqref{eq:properbound}; it is therefore recorded only as a limiting
comparison rather than as a separate conclusion of the theorem.
\end{remark}

\begin{remark}[The integrated estimate is weaker]\label{rem:integratedweak}
The compact integral identity alone gives
\[
\lambda\le\frac{c(n-1)}4+\frac{3cq}{4n},
\qquad q=\dim\widetilde{\mathscr H},
\]
using \(\|P\|^2\le q\) and \(\|h\|^2\ge0\).
Under the convention adopted here, $\mathscr H^\perp$ already has positive
rank. Thus the only additional condition for the mixed case is
$\widetilde{\mathscr H}\ne0$, equivalently $q>0$. In that case
Theorem~\ref{thm:maxbound} is strictly stronger than the integrated estimate,
because
\[
\lambda\le\frac{c(n-1)}4
<\frac{c(n-1)}4+\frac{3cq}{4n}.
\]
Consequently, equality in the weaker integrated estimate is impossible whenever
$q>0$.  We therefore retain the integral
identity but do not treat its weaker bound as a principal result.
\end{remark}

\begin{corollary}[Strictness from the totally real block]\label{cor:strictproper}
Assume the hypotheses of Theorem~\ref{thm:maxbound}. Let
\[
\mathcal N_p(h)
=
\{X\in T_pN:\ h(X,Y)=0\ \text{for every }Y\in T_pN\}
\]
be the relative nullity space. If, at some maximum point $p$ of $f$,
\[
\mathscr H^\perp_p\cap\mathcal N_p(h)=\{0\},
\]
then
\[
\lambda<\frac{c(n-1)}4.
\]
In particular, the conclusion holds when the displayed intersection is zero
at every point of $N$.
\end{corollary}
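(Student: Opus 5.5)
The argument reuses the pointwise estimate \eqref{eq:maxpoint} from Theorem~\ref{thm:maxbound} at the chosen maximum point $p$. We apply it in the totally real direction, where it carries no $P$-contribution. The goal is to show that the second-fundamental-form term cannot vanish there, which upgrades the inequality to a strict one.

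\textbf{Step 1.} Fix a maximum point $p$ of $f$ at which $\mathscr H^\perp_p\cap\mathcal N_p(h)=\{0\}$. By the PTR convention, $\mathscr H^\perp_p\neq0$, so we may choose a unit vector $X\in\mathscr H^\perp_p$. Since $\mathscr H^\perp$ is totally real, $JX$ is normal and $PX=0$. Thus $X$ is a unit eigenvector of $-P^2$ with $\rho^2=0$, and \eqref{eq:maxpoint} gives
\[
\lambda-\frac{c(n-1)}4\le-\sum_{i=1}^n\|h(X,e_i)\|^2 .
\]

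\textbf{Step 2.} We show the right-hand side is strictly negative. If $\sum_i\|h(X,e_i)\|^2=0$, then $h(X,e_i)=0$ for every $i$. Since $h$ is bilinear and $\{e_i\}$ is a basis, this gives $h(X,Y)=0$ for all $Y\in T_pN$, i.e.\ $X\in\mathcal N_p(h)$. Then $X$ would be a nonzero element of $\mathscr H^\perp_p\cap\mathcal N_p(h)$, contradicting the hypothesis. Hence $\sum_i\|h(X,e_i)\|^2>0$, and $\lambda<c(n-1)/4$ follows.

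\textbf{Step 3.} For the final sentence, note that $f$ is continuous on the compact manifold $N$, so it attains its maximum at some point $p$. If the intersection is trivial at every point, it is trivial at that $p$ in particular, and Steps 1--2 apply.

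The proof contains no genuine obstacle. The only delicate points are the following. First, we must use a vector in $\mathscr H^\perp_p$ rather than an arbitrary vector of $\ker P_p$, since the hypothesis is stated for $\mathscr H^\perp_p$. Second, the relative nullity condition must be converted into positivity of $\sum_i\|h(X,e_i)\|^2$, which uses the frame-independence of that sum.
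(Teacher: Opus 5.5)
Your proposal is correct and follows essentially the same route as the paper. It applies the first inequality of \eqref{eq:maxpoint} at the chosen maximum point to a unit vector $X\in\mathscr H^\perp_p$ (so $\rho^2=0$), and uses the nullity hypothesis to get $\sum_i\|h(X,e_i)\|^2>0$. Your explicit uses of $\mathscr H^\perp_p\neq0$ and of compactness to guarantee a maximum point are details the paper leaves implicit.
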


\begin{proof}
Choose a maximum point \(p\) of \(f\) at which the stated relative-nullity
condition holds, and take a unit vector
\(X\in\mathscr H^\perp_p\). Then \(PX=0\). The hypothesis implies
\(X\notin\mathcal N_p(h)\), hence
\[
\sum_i\|h(X,e_i)\|^2>0.
\]
The first inequality in \eqref{eq:maxpoint}, with \(\rho^2=0\), gives
\[
\lambda-\frac{c(n-1)}4
\le
-\sum_i\|h(X,e_i)\|^2<0.
\]
\end{proof}

\begin{example}[Strictness on the Segre model]\label{ex:SegreStrict}
In Example~\ref{ex:SegrePTR}, every unit
\(Z\in\mathscr H^\perp=T\mathbb{RP}^{5}\) satisfies
\[
\sum_i\|h(Z,e_i)\|^2=2.
\]
Thus Corollary~\ref{cor:strictproper} applies.  Indeed,
\[
\lambda=4
<
\frac{4(7-1)}4
=6.
\]
This gives a nonflat mixed PTR model (of CR type, hence non-proper) for which the effective bound is
strict.
\end{example}

\begin{remark}[Standard endpoint comparison]\label{ex:maxequality}
For the totally geodesic real form
$\mathbb{RP}^{n}(c/4)\subset\mathbb{CP}^{n}(c)$, which is a totally real
PTR submanifold, one has $P=0$, $h=0$, and
\[
\lambda=\frac{c(n-1)}4.
\]
Thus the zero-level PTR bound in Theorem~\ref{thm:maxbound} is attained.
For comparison only, the totally geodesic holomorphic inclusion
$\mathbb{CP}^{r}(c)\subset\mathbb{CP}^{m}(c)$ satisfies $P^2=-I$, $h=0$,
and
\[
\lambda=\frac{c(n+2)}4.
\]
This holomorphic model verifies the opposite algebraic endpoint of the
operator formula but is not being asserted to be a PTR example in the sense
of Definition~2.1.
\end{remark}

\begin{theorem}[A compact triviality criterion]\label{thm:constantPhi}
Let $N^n$ be a compact connected PTR-submanifold of $\widetilde M(c)$ carrying
a gradient Ricci soliton. Define
\[
\Phi=\frac{3c}{4}\|P\|^2+n^2\|H\|^2-\|h\|^2.
\]
If $\Phi$ is constant, then $f$ is constant and $N$ is Einstein:
\[
\operatorname{Ric}=\lambda g,\qquad
\lambda=\frac{c(n-1)}4+\frac{\Phi}{n}.
\]
\end{theorem}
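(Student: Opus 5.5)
The plan is to read the scalar identity of Proposition~\ref{prop:scalar} as an equation for $\Delta f$, and then use compactness. With $\Phi$ as in the statement, \eqref{eq:scalarPTR} can be rewritten as
\[
\Delta f = n\lambda-\frac{cn(n-1)}4-\Phi .
\]
If $\Phi$ is constant, the right-hand side is a constant, which we call $\kappa$. Thus $\Delta f\equiv\kappa$ on $N$.

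The first step is to show that $\kappa=0$. Since $N$ is compact without boundary, the divergence theorem gives
\[
\kappa\operatorname{Vol}(N)=\int_N\Delta f\,dV=0 .
\]
Hence $\kappa=0$. Equivalently, in the compact integral identity \eqref{eq:compact} the integrand of $\Phi$ may be pulled out as a constant. This gives
\[
n\lambda=\frac{cn(n-1)}4+\Phi,
\]
which is exactly the stated value of $\lambda$.

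The second step is to deduce that $f$ is constant. We now know $\Delta f=0$, so $f$ is harmonic on a compact manifold. Multiplying by $f$ and integrating by parts gives
\[
\int_N\|\nabla f\|^2\,dV=-\int_N f\,\Delta f\,dV=0 .
\]
Hence $\nabla f=0$, and connectedness gives that $f$ is constant. One could equally invoke the maximum principle for harmonic functions. Then $\operatorname{Hess}f=0$, and the soliton equation \eqref{eq:soliton} reduces to $\operatorname{Ric}=\lambda g$, so $N$ is Einstein with the constant found in the first step.

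There is no real obstacle here. The one point requiring care is the sign convention for $\Delta$. The paper defines $\Delta f=\operatorname{tr}\operatorname{Hess}f$, so $\Delta f=\operatorname{div}\nabla f$, and this sign is the one used in the integration by parts. The argument does not use minimality, the sign of $c$, or the PTR convention on $\mathscr H^\perp$; it relies only on the scalar identity and compactness.
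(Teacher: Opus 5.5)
Your proof is correct and follows the paper's argument. The paper phrases it through $\operatorname{Scal}_N=\frac{cn(n-1)}4+\Phi$ and $\Delta f=n\lambda-\operatorname{Scal}_N$, which is the same as your rewriting of \eqref{eq:scalarPTR}, and then uses the same integration and harmonic-function steps to conclude.
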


\begin{proof}
The scalar-curvature calculation in Proposition~\ref{prop:scalar} gives
\[
\operatorname{Scal}_N=\frac{cn(n-1)}4+\Phi,
\]
so $\operatorname{Scal}_N$ is constant. Tracing the soliton equation yields
\[
\Delta f=n\lambda-\operatorname{Scal}_N,
\]
which is constant because $\lambda\in\mathbb R$. Compactness gives
\[
0=\int_N\Delta f\,dV=(n\lambda-\operatorname{Scal}_N)\operatorname{Vol}(N),
\]
hence $\Delta f=0$. Since $N$ is compact and connected, $f$ is constant.
The soliton equation reduces to $\operatorname{Ric}=\lambda g$, and
$n\lambda=\operatorname{Scal}_N$ gives the stated formula.
\end{proof}

\begin{remark}
The criterion applies, in particular, when $\|P\|^2$, $\|H\|^2$, and
$\|h\|^2$ are all constant. Thus many high-symmetry compact candidates can
support only trivial gradient solitons. It does not exclude noncompact
examples or compact examples for which $\Phi$ varies.
\end{remark}

\begin{example}[The Segre model and the triviality criterion]\label{ex:SegrePhi}
For Example~\ref{ex:SegrePTR},
\[
\Phi
=
\frac{3\cdot4}{4}\cdot2-\|h\|^2
=
6-20=-14.
\]
Thus \(\Phi\) is constant, and Theorem~\ref{thm:constantPhi} predicts that the
compact soliton must be trivial.  Moreover,
\[
\lambda
=
\frac{4(7-1)}4+\frac{-14}{7}
=
6-2=4,
\]
exactly as in the direct product calculation.  This is an internal
consistency check; it does not provide a nonflat example with nonconstant
potential.
\end{example}

\section{Reconstruction, spectral separation, and anisotropy}\label{sec:spectral}

We now use Lemma~\ref{lem:Ricci} together with the gradient soliton equation.

\begin{theorem}[Soliton--extrinsic reconstruction]\label{thm:reconstruction}
Let $N^n$ be a PTR-submanifold of a complex space form $\widetilde M(c)$ carrying a gradient Ricci soliton. Then
\begin{equation}\label{eq:operator}
\boxed{
S_f+\mathcal Q_h
=
\left(\lambda-\frac{c(n-1)}4\right)I
+\frac{3c}{4}P^2.
}
\end{equation}
If $c\ne0$, then
\begin{equation}\label{eq:reconstruct}
P^2
=
\frac4{3c}
\left[
S_f+\mathcal Q_h
-\left(\lambda-\frac{c(n-1)}4\right)I
\right].
\end{equation}
\end{theorem}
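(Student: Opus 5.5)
The plan is to convert the bilinear identity of Lemma~\ref{lem:Ricci} into an endomorphism identity via the metric, and then substitute it into the soliton equation \eqref{eq:soliton}. By Lemma~\ref{lem:Ricci}, for all tangent $X,Y$ we have
\[
\operatorname{Ric}(X,Y)=g\Bigl(\Bigl[\tfrac{c(n-1)}4 I-\tfrac{3c}{4}P^2+\mathcal Q_h\Bigr]X,Y\Bigr).
\]
The soliton equation, written through $S_f$, reads
\[
\operatorname{Ric}(X,Y)+g(S_fX,Y)=\lambda g(X,Y).
\]

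Substituting and collecting terms, I obtain
\[
g\Bigl(\Bigl[S_f+\mathcal Q_h-\bigl(\lambda-\tfrac{c(n-1)}4\bigr)I-\tfrac{3c}{4}P^2\Bigr]X,Y\Bigr)=0
\]
for all $X,Y$. Nondegeneracy of $g$ then gives the operator equation \eqref{eq:operator} pointwise. I will note that every operator involved is self-adjoint: $S_f$ because the Hessian is symmetric, $\mathcal Q_h$ by \eqref{eq:Qhdef}, and $P^2$ by Lemma~\ref{lem:P2basic}. This is consistent, although nondegeneracy alone already suffices to pass from the bilinear identity to the operator identity.

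For \eqref{eq:reconstruct}, I assume $c\neq0$, isolate $\frac{3c}{4}P^2$ in \eqref{eq:operator}, and multiply by $\frac{4}{3c}$.

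I do not expect a genuine obstacle, since the substantive computation, the Gauss contraction with the correct sign and coefficient $-3$ on $P^2$, is already done in Lemma~\ref{lem:Ricci}. The only point needing care is consistency of conventions: $\operatorname{Ric}$ and $S_f$ must be converted with the same metric identification, and the sign convention for $\mathcal Q_h$ must match \eqref{eq:Qhdef}. As a sanity check I will evaluate the formula on Example~\ref{ex:SegrePTR}, where $f$ is constant, $\lambda=4$, $c=4$, and $n=7$. There \eqref{eq:reconstruct} gives $P^2=\frac13[\mathcal Q_h+2I]$. On $\mathscr D^\perp$ the value $\mathcal Q_h=-2$ yields $P^2=0$, and on $\mathscr D$ the value $\mathcal Q_h=-5$ yields $P^2=-I$, both as expected.
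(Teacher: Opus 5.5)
Your proposal is correct and follows essentially the same route as the paper: rewrite Lemma~\ref{lem:Ricci} as the Ricci endomorphism $\tfrac{c(n-1)}{4}I-\tfrac{3c}{4}P^2+\mathcal Q_h$, substitute it into $\operatorname{Ric}^\sharp+S_f=\lambda I$, rearrange, and multiply by $4/(3c)$ when $c\neq0$. Your Segre sanity check also matches Example~\ref{ex:SegreReconstruction}.
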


\begin{proof}
Lemma~\ref{lem:Ricci} gives, for all tangent vectors \(X,Y\),
\[
\operatorname{Ric}(X,Y)
=
\frac{c(n-1)}4g(X,Y)
-\frac{3c}{4}g(P^2X,Y)
+g(\mathcal Q_hX,Y).
\]
Therefore the Ricci endomorphism is
\[
\operatorname{Ric}^{\sharp}
=
\frac{c(n-1)}4I-\frac{3c}{4}P^2+\mathcal Q_h.
\]
The soliton equation is equivalent to
\[
\operatorname{Ric}^{\sharp}+S_f=\lambda I.
\]
Substitution gives
\[
\frac{c(n-1)}4I-\frac{3c}{4}P^2+\mathcal Q_h+S_f
=
\lambda I.
\]
Move the scalar term and the \(P^2\)-term to the opposite sides:
\[
S_f+\mathcal Q_h
=
\left(\lambda-\frac{c(n-1)}4\right)I
+\frac{3c}{4}P^2.
\]
This proves \eqref{eq:operator}.

If \(c\ne0\), subtract the scalar multiple of the identity:
\[
S_f+\mathcal Q_h
-\left(\lambda-\frac{c(n-1)}4\right)I
=
\frac{3c}{4}P^2.
\]
Multiplication by \(4/(3c)\) yields \eqref{eq:reconstruct}.
\end{proof}

\begin{example}[Reconstruction on the Segre model]\label{ex:SegreReconstruction}
Continue Example~\ref{ex:SegrePTR}.  Because \(H=0\), the mixed geometry gives
\[
\mathcal Q_hX=-5X
\quad(X\in\mathscr D),
\qquad
\mathcal Q_hZ=-2Z
\quad(Z\in\mathscr D^\perp).
\]
Also \(S_f=0\) and
\[
\lambda-\frac{c(n-1)}4
=
4-\frac{4\cdot6}{4}
=-2.
\]
Hence \eqref{eq:reconstruct} yields
\[
P^2X
=
\frac13(-5X+2X)
=-X
\]
and
\[
P^2Z
=
\frac13(-2Z+2Z)
=0.
\]
The reconstructed operator therefore agrees with the geometric decomposition
of Example~\ref{ex:SegrePTR}.
\end{example}

\begin{corollary}[Constraint on the totally real distribution]\label{cor:Hperpconstraint}
Let $a=\lambda-c(n-1)/4$. Then
\[
(S_f+\mathcal Q_h)X=aX
\]
for every $X\in\mathscr H^\perp$. Thus the totally real distribution is
contained pointwise in the eigenspace of $S_f+\mathcal Q_h$ corresponding to
the constant eigenvalue $a$.
\end{corollary}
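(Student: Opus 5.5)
The plan is to apply the operator identity \eqref{eq:operator} of Theorem~\ref{thm:reconstruction} directly to vectors of the totally real distribution. Since the equation is an identity of endomorphisms of $T_pN$, it suffices to evaluate both sides on an arbitrary $X\in\mathscr H^\perp_p$ and to identify the value of $P^2X$.

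The first step is to observe that $\mathscr H^\perp_p\subseteq\ker P_p$. Because $\mathscr H^\perp$ is totally real, $JX$ is normal for every $X\in\mathscr H^\perp$. Hence the tangential component in the decomposition \eqref{eq:Jdecomp} vanishes, so $PX=0$ and therefore $P^2X=0$. Under the standing maximal convention of Remark~\ref{rem:canonicalPTR} this is immediate, since $\mathscr H^\perp=\ker P$. The argument above, however, shows that the inclusion holds for any admissible totally real block, so the convention is not needed.

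The second step is to substitute into \eqref{eq:operator}:
\[
(S_f+\mathcal Q_h)X=aX+\tfrac{3c}{4}P^2X=aX .
\]
Here $a=\lambda-c(n-1)/4$ is a genuine constant, because $\lambda\in\mathbb R$ and $c$, $n$ are fixed. It follows that every nonzero $X\in\mathscr H^\perp_p$ is an eigenvector of the self-adjoint operator $S_f+\mathcal Q_h$ with eigenvalue $a$. This gives the pointwise inclusion of $\mathscr H^\perp_p$ into the $a$-eigenspace.

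There is no real obstacle. The only point requiring care is that the argument uses only the totally real property, $JX$ normal, and not the maximal choice. Note also that no assumption $c\neq0$ is needed here, since the coefficient $3c/4$ multiplies a vanishing term.
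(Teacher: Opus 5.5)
Your proposal is correct and follows the same route as the paper, which likewise notes that $PX=0$ for $X\in\mathscr H^\perp$ and substitutes into \eqref{eq:operator}. Your side remarks are also accurate: the argument needs neither the maximal convention nor $c\neq0$.
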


\begin{proof}
For $X\in\mathscr H^\perp$ one has $PX=0$. Equation~\eqref{eq:operator}
therefore gives the assertion.
\end{proof}

\begin{proposition}[Pointwise spectral correspondence]\label{prop:spectralcorr}
Assume $c\ne0$. For a nonzero vector $X\in T_pN$ and a number $0\le\rho^2\le1$,
\[
P^2X=-\rho^2X
\]
if and only if
\begin{equation}\label{eq:eigcorr}
(S_f+\mathcal Q_h)X
=
\left[\lambda-\frac c4(n-1+3\rho^2)\right]X.
\end{equation}
Thus $P^2$ and $S_f+\mathcal Q_h$ have the same pointwise eigenspaces.
\end{proposition}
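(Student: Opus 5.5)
The plan is to read the equivalence directly off the operator identity \eqref{eq:operator} of Theorem~\ref{thm:reconstruction}. That identity writes $S_f+\mathcal Q_h$ as an affine function of $P^2$:
\[
S_f+\mathcal Q_h=aI+\tfrac{3c}{4}P^2,\qquad a=\lambda-\tfrac{c(n-1)}4 .
\]
Since the coefficient $\tfrac{3c}{4}$ is nonzero when $c\ne0$, the two operators determine each other pointwise. Eigenvectors should then match, with eigenvalues related by the same affine map.

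For the forward direction, I will take a nonzero $X\in T_pN$ with $P^2X=-\rho^2X$ and apply \eqref{eq:operator} to it. This gives
\[
(S_f+\mathcal Q_h)X=\bigl(a-\tfrac{3c}{4}\rho^2\bigr)X .
\]
Expanding $a$ shows that the coefficient equals $\lambda-\tfrac c4(n-1+3\rho^2)$, which is \eqref{eq:eigcorr}. This direction does not use $c\ne0$.

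For the converse, I will assume \eqref{eq:eigcorr} and use the reconstruction formula \eqref{eq:reconstruct}, which is available because $c\ne0$:
\[
P^2X=\tfrac{4}{3c}\bigl[(S_f+\mathcal Q_h)X-aX\bigr]=\tfrac{4}{3c}\Bigl(-\tfrac{3c}{4}\rho^2\Bigr)X=-\rho^2X .
\]
The hypothesis $0\le\rho^2\le1$ plays no role in either computation. It only records the range of possible eigenvalues allowed by Lemma~\ref{lem:P2basic}.

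For the final sentence, I will note that the map $\mu\mapsto a-\tfrac{3c}{4}\mu$ is a bijection from eigenvalues $-\mu$ of $P^2$, where $\mu=\rho^2$, to eigenvalues of $S_f+\mathcal Q_h$. The step needing the most care is the converse: an eigenvalue $\nu$ of $S_f+\mathcal Q_h$ must correspond to some admissible $\rho^2$. To handle it, I will solve $\nu=a-\tfrac{3c}{4}\rho^2$ for $\rho^2$ and apply \eqref{eq:reconstruct}, which shows that $X$ is an eigenvector of $P^2$ with eigenvalue $-\rho^2$. Lemma~\ref{lem:P2basic} then forces $0\le\rho^2\le1$ automatically. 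Hence every eigenspace of $S_f+\mathcal Q_h$ is an eigenspace of $P^2$ and conversely. Equivalently, both operators are self-adjoint and each is a polynomial in the other, so their spectral decompositions coincide.
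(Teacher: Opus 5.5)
Your proposal is correct and follows the paper's proof essentially verbatim: the forward direction applies \eqref{eq:operator} to the eigenvector, and the converse substitutes \eqref{eq:eigcorr} into the reconstruction formula \eqref{eq:reconstruct}, which is the only place where $c\ne0$ enters. Your extra argument for the final sentence (solve for $\rho^2$, apply \eqref{eq:reconstruct}, then use Lemma~\ref{lem:P2basic} to place $\rho^2$ in $[0,1]$) is valid and fills in a step the paper leaves implicit.
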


\begin{proof}
If $P^2X=-\rho^2X$, then \eqref{eq:operator} gives
\begin{align*}
(S_f+\mathcal Q_h)X
&=\left(\lambda-\frac{c(n-1)}4\right)X+\frac{3c}{4}P^2X\\
&=\left(\lambda-\frac{c(n-1)}4-\frac{3c}{4}\rho^2\right)X\\
&=\left[\lambda-\frac c4(n-1+3\rho^2)\right]X.
\end{align*}
Conversely, assume \eqref{eq:eigcorr}. Using \eqref{eq:reconstruct},
\begin{align*}
P^2X
&=\frac4{3c}\left[
(S_f+\mathcal Q_h)X-
\left(\lambda-\frac{c(n-1)}4\right)X
\right]\\
&=\frac4{3c}\left(-\frac{3c}{4}\rho^2X\right)
=-\rho^2X.
\end{align*}
\end{proof}

\begin{corollary}[Spectral interval and endpoint detection]\label{cor:interval}
Assume $q=\dim\widetilde{\mathscr H}>0$ and use the maximal constant-rank
choice of Remark~\ref{rem:canonicalPTR}. If $c>0$, every eigenvalue $\sigma$
of $(S_f+\mathcal Q_h)|_{\widetilde{\mathscr H}}$ satisfies
\begin{equation}\label{eq:interval}
\lambda-\frac{c(n+2)}4
\le \sigma<
\lambda-\frac{c(n-1)}4.
\end{equation}
The lower endpoint occurs precisely on invariant directions
$\|PX\|=\|X\|$. The excluded upper endpoint corresponds to the zero level
$PX=0$, whose eigenspace in the full tangent bundle is
$\mathscr H^\perp=\ker P$. For $c<0$, the order reverses:
\[
\lambda-\frac{c(n-1)}4
<\sigma\le
\lambda-\frac{c(n+2)}4.
\]
\end{corollary}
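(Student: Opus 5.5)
The plan is to reduce everything to Proposition~\ref{prop:spectralcorr}, once we know how $\widetilde{\mathscr H}$ sits in the spectral decomposition of $P^2$.

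\textbf{Step 1 (invariance and diagonalization).} Under the maximal choice of Remark~\ref{rem:canonicalPTR}, $\widetilde{\mathscr H}_p=\operatorname{Im}P_p=(\ker P_p)^\perp$. Since $P^2$ is self-adjoint (Lemma~\ref{lem:P2basic}) and commutes with $P$, this subspace is $P^2$-invariant. By \eqref{eq:operator} it is then also invariant under $S_f+\mathcal Q_h$, which differs from a combination of $I$ and $P^2$ only by a constant. So the restriction $(S_f+\mathcal Q_h)|_{\widetilde{\mathscr H}}$ is a self-adjoint endomorphism of $\widetilde{\mathscr H}_p$, and it has an orthonormal eigenbasis common with $P^2|_{\widetilde{\mathscr H}}$.

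\textbf{Step 2 (range of $\rho^2$).} Let $\sigma$ be an eigenvalue of the restriction with eigenvector $X\in\widetilde{\mathscr H}_p$. Because \eqref{eq:reconstruct} expresses $P^2$ as an affine function of $S_f+\mathcal Q_h$ (using $c\ne0$), $X$ is also a $P^2$-eigenvector, say $P^2X=-\rho^2X$.
\begin{itemize}
\item Lemma~\ref{lem:P2basic} gives $0\le\rho^2\le1$.
\item $\rho^2\neq0$: otherwise $P^2X=0$ gives $\|PX\|^2=-g(P^2X,X)=0$, so $X\in\ker P\cap(\ker P)^\perp=\{0\}$.
\end{itemize}
Hence $0<\rho^2\le1$.

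\textbf{Step 3 (the interval).} Proposition~\ref{prop:spectralcorr} gives
\[
\sigma=\lambda-\tfrac c4(n-1+3\rho^2),
\]
which for $c>0$ is strictly decreasing in $\rho^2$. The range $0<\rho^2\le1$ therefore yields exactly \eqref{eq:interval}: $\rho^2=1$ gives $\lambda-c(n+2)/4$, and $\rho^2\to0$ gives the excluded value $\lambda-c(n-1)/4$.

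\textbf{Step 4 (endpoints).} For the lower endpoint:
\begin{itemize}
\item $\sigma=\lambda-c(n+2)/4$ if and only if $\rho^2=1$, which by the identity $\|PX\|^2=\rho^2\|X\|^2$ in the proof of Lemma~\ref{lem:P2basic} means $\|PX\|=\|X\|$.
\item Conversely, if $\|PX\|=\|X\|$, then $FX=0$ and $JX=PX$. Applying $J$ again gives $-X=JPX$, whose tangential part is $P^2X$ since $PX$ is tangent. So $P^2X=-X$, i.e.\ $X$ is a $\rho^2=1$ eigenvector.
\end{itemize}
For the upper endpoint: by Proposition~\ref{prop:spectralcorr}, the $\rho^2=0$ eigenspace in all of $T_pN$ is $\ker P_p=\mathscr H^\perp_p$, consistent with Corollary~\ref{cor:Hperpconstraint}.

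\textbf{Step 5 ($c<0$).} The map $\rho^2\mapsto\sigma$ is now increasing, so the inequalities swap:
\[
\lambda-\tfrac{c(n-1)}4<\sigma\le\lambda-\tfrac{c(n+2)}4.
\]

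\textbf{Main obstacle.} The only nonformal point is Step~2, excluding $\rho^2=0$ on $\widetilde{\mathscr H}$. This is precisely where the maximal choice $\widetilde{\mathscr H}=(\ker P)^\perp$ is needed. With a non-maximal splitting, $\widetilde{\mathscr H}$ could contain kernel directions and the upper endpoint would be attained. The converse direction in Step~4 also needs the small computation $P^2X=-X$ from $FX=0$ given above.
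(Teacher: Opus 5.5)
Your proposal is correct and follows essentially the same route as the paper: the maximal choice $\widetilde{\mathscr H}=(\ker P)^\perp$ together with Lemma~\ref{lem:P2basic} gives $0<\rho^2\le1$ on $\widetilde{\mathscr H}$, and the interval and its endpoints then follow from the strictly monotone relation $\sigma=\lambda-\frac c4(n-1+3\rho^2)$ of Proposition~\ref{prop:spectralcorr}. You also make explicit two points the paper leaves implicit: that $\widetilde{\mathscr H}$ is invariant under $S_f+\mathcal Q_h$, so the restriction is a genuine self-adjoint endomorphism whose eigenvectors are $P^2$-eigenvectors, and that $\|PX\|=\|X\|$ forces $P^2X=-X$.
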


\begin{proof}
Let $X\in\widetilde{\mathscr H}$ be a unit $P^2$-eigenvector,
\[
P^2X=-\rho^2X.
\]
By Remark~\ref{rem:canonicalPTR},
$\widetilde{\mathscr H}=\operatorname{Im}P=(\ker P)^\perp$, so a nonzero
vector in $\widetilde{\mathscr H}$ cannot lie in $\ker P$. Lemma~\ref{lem:P2basic}
therefore gives
\[
0<\rho^2\le1.
\]
By Proposition~\ref{prop:spectralcorr},
\[
\sigma(\rho^2)=\lambda-\frac c4(n-1+3\rho^2).
\]
For $c>0$ this is decreasing in $\rho^2$, which gives
\[
\lambda-\frac{c(n+2)}4\le\sigma<\lambda-\frac{c(n-1)}4.
\]
The left equality holds exactly when $\rho^2=1$, equivalently
$\|PX\|=\|X\|$; then $FX=0$ and the direction is invariant. The missing
upper value corresponds to $\rho=0$, hence to $\ker P=\mathscr H^\perp$ in
the full tangent bundle. For $c<0$ the monotonicity reverses.
\end{proof}

\begin{remark}[Segre endpoint realization and standard comparison]\label{ex:standardextreme}
The Segre $CR$-product of Example~\ref{ex:SegrePTR} is a nonflat PTR
submanifold with $c=4$, $n=7$, $\lambda=4$, and
$\widetilde{\mathscr H}=\mathscr D$. By
Example~\ref{ex:SegreReconstruction},
\[
(S_f+\mathcal Q_h)|_{\mathscr D}=-5I,
\qquad
(S_f+\mathcal Q_h)|_{\mathscr D^\perp}=-2I.
\]
The lower endpoint in Corollary~\ref{cor:interval} is
\[
\lambda-\frac{c(n+2)}4
=4-\frac{4\cdot9}{4}=-5,
\]
so it is attained on $\mathscr D$ because $P^2|_{\mathscr D}=-I$. On the
totally real block,
\[
a=\lambda-\frac{c(n-1)}4=4-6=-2,
\]
which verifies Corollary~\ref{cor:Hperpconstraint} on
$\mathscr D^\perp=T\mathbb{RP}^5$. Thus one nonflat PTR model exhibits
both the invariant endpoint on the ambiguous distribution and the
zero-tangency eigenvalue on the complementary totally real block.

For the totally real projective form
$\mathbb{RP}^{n}(c/4)\subset\mathbb{CP}^{n}(c)$ one has $P^2=0$,
$\mathcal Q_h=0$, $S_f=0$, and $\lambda=c(n-1)/4$; it realizes the
zero-level endpoint globally, although $q=0$ so the restricted interval on
$\widetilde{\mathscr H}$ is not applicable. For comparison only, the
holomorphic inclusion $\mathbb{CP}^{r}(c)\subset\mathbb{CP}^{m}(c)$ has
$P^2=-I$, $\mathcal Q_h=0$, $S_f=0$, and $\lambda=c(n+2)/4$. In accordance
with the PTR convention fixed after Definition~\ref{def:PTR}, this last
model is only an external K\"ahler comparison and is not used as a PTR
example.
\end{remark}

\begin{proposition}[Spectral separation]\label{prop:separation}
Let $c\ne0$, and suppose $X_i,X_j$ are unit eigenvectors of $P^2$ with squared tangency levels $\rho_i^2$ and $\rho_j^2$. Let $\sigma_i,\sigma_j$ be the corresponding eigenvalues of $S_f+\mathcal Q_h$. Then
\begin{equation}\label{eq:separation}
\sigma_i-\sigma_j
=-\frac{3c}{4}\bigl(\rho_i^2-\rho_j^2\bigr).
\end{equation}
In particular, distinct tangency levels are separated by the soliton--extrinsic spectrum whenever $c\ne0$.
\end{proposition}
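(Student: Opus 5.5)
The plan is to read this off directly from Proposition~\ref{prop:spectralcorr}, equivalently from the operator identity \eqref{eq:operator} of Theorem~\ref{thm:reconstruction}. The eigenvalues $\sigma_i,\sigma_j$ have to be attached to the given vectors, so the first step is to fix what ``corresponding eigenvalue'' means. Since $P^2X_i=-\rho_i^2X_i$, substituting into \eqref{eq:operator} shows that $X_i$ is itself an eigenvector of $S_f+\mathcal Q_h$. Its eigenvalue is
\[
\sigma_i=\lambda-\frac c4\bigl(n-1+3\rho_i^2\bigr).
\]
The same computation applied to $X_j$ gives the analogous formula for $\sigma_j$. This is exactly the forward direction of Proposition~\ref{prop:spectralcorr}, and it does not even need $c\ne0$.

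Next, subtract the two expressions. The terms $\lambda$ and $c(n-1)/4$ are the same for both vectors because $\lambda$ is a constant and $n$ is fixed, so they cancel. This leaves
\[
\sigma_i-\sigma_j=-\frac{3c}{4}\bigl(\rho_i^2-\rho_j^2\bigr),
\]
which is \eqref{eq:separation}.

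For the ``in particular'' clause, assume $c\ne0$ and $\rho_i^2\ne\rho_j^2$. Then the right-hand side of \eqref{eq:separation} is nonzero, so $\sigma_i\ne\sigma_j$. Conversely, $\sigma_i=\sigma_j$ forces $\rho_i^2=\rho_j^2$. Hence distinct tangency levels give distinct eigenvalues of $S_f+\mathcal Q_h$. By the converse direction of Proposition~\ref{prop:spectralcorr}, the corresponding eigenspaces of the two operators coincide.

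I expect no real obstacle; the only point needing care is the interpretation of $\sigma_i$. One must make sure it is the eigenvalue of $S_f+\mathcal Q_h$ on the same vector $X_i$, and not an unrelated eigenvalue. The first step settles this, because it shows $X_i$ is automatically an eigenvector. The statement also makes sense at points with $X_i,X_j$ taken in the same tangent space $T_pN$. If one wants to compare different points, the formula still holds because the coefficients are constants.
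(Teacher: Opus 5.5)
Your proof is correct and follows the paper's argument: you apply the forward direction of Proposition~\ref{prop:spectralcorr} (equivalently \eqref{eq:operator}) to each eigenvector to get $\sigma_i=\lambda-\frac c4(n-1+3\rho_i^2)$, subtract, and note that the right-hand side is nonzero when $c\ne0$ and $\rho_i^2\ne\rho_j^2$. Your additional observations are accurate: the difference formula itself does not need $c\ne0$, and it remains valid when the two vectors lie at different points, since the coefficients are constants.
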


\begin{proof}
For \(X_i\) and \(X_j\), Proposition~\ref{prop:spectralcorr} gives
\[
\sigma_i
=
\lambda-\frac c4(n-1+3\rho_i^2)
\]
and
\[
\sigma_j
=
\lambda-\frac c4(n-1+3\rho_j^2).
\]
Subtracting,
\begin{align*}
\sigma_i-\sigma_j
&=
-\frac{3c}{4}\rho_i^2+\frac{3c}{4}\rho_j^2\\
&=
-\frac{3c}{4}(\rho_i^2-\rho_j^2).
\end{align*}
If \(c\ne0\) and \(\rho_i^2\ne\rho_j^2\), the right-hand side is nonzero, so \(\sigma_i\ne\sigma_j\).
\end{proof}

\begin{remark}[Algebraic spectral check]
If \(c=4\), \(\rho_i^2=1\), and \(\rho_j^2=1/4\), then
\[
\sigma_i-\sigma_j
=
-\frac{3\cdot4}{4}\left(1-\frac14\right)
=
-\frac94.
\]
This checks Proposition~\ref{prop:separation}; no nonflat realization is asserted.
\end{remark}

We now remove the common scalar part of \eqref{eq:operator}. Set
\[
q=\dim\widetilde{\mathscr H}.
\]
Assume $q>0$. For a self-adjoint endomorphism $A$ of $\widetilde{\mathscr H}$, write
\[
A^{\circ}=A-\frac{\operatorname{tr}A}{q}I.
\]

\begin{theorem}[Exact anisotropy identity]\label{thm:anisotropy}
Assume $c\ne0$ and $q=\dim\widetilde{\mathscr H}>0$. On the ambiguous distribution,
\begin{equation}\label{eq:tracefree}
\left[(S_f+\mathcal Q_h)|_{\widetilde{\mathscr H}}\right]^{\circ}
=
\frac{3c}{4}
\left[P^2|_{\widetilde{\mathscr H}}\right]^{\circ}.
\end{equation}
Let the eigenvalues of $P^2|_{\widetilde{\mathscr H}}$ be
$-\rho_1^2,\dots,-\rho_q^2$, repeated according to multiplicity, and put
\[
\overline{\rho^2}
=\frac1q\sum_{\alpha=1}^{q}\rho_\alpha^2.
\]
Then
\begin{equation}\label{eq:anisotropy}
\boxed{
\left\|
\left[(S_f+\mathcal Q_h)|_{\widetilde{\mathscr H}}\right]^{\circ}
\right\|_{\mathrm{HS}}^2
=
\frac{9c^2}{16}
\sum_{\alpha=1}^{q}
\left(\rho_\alpha^2-\overline{\rho^2}\right)^2.
}
\end{equation}
\end{theorem}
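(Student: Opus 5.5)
The plan is to restrict the operator identity \eqref{eq:operator} of Theorem~\ref{thm:reconstruction} to the ambiguous distribution, then remove the trace. Under the maximal choice of Remark~\ref{rem:canonicalPTR}, $\widetilde{\mathscr H}=\operatorname{Im}P$ is $P$-invariant, so it is also $P^2$-invariant.

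\textbf{Invariance of $\widetilde{\mathscr H}$.} We first need $S_f+\mathcal Q_h$ to preserve $\widetilde{\mathscr H}$. By \eqref{eq:operator}, $S_f+\mathcal Q_h$ equals a scalar multiple of $I$ plus $\frac{3c}{4}P^2$. Both of these preserve $\widetilde{\mathscr H}$, so $(S_f+\mathcal Q_h)|_{\widetilde{\mathscr H}}$ is a well-defined self-adjoint endomorphism of $\widetilde{\mathscr H}$. We then have, as endomorphisms of $\widetilde{\mathscr H}$,
\[
(S_f+\mathcal Q_h)|_{\widetilde{\mathscr H}}=aI+\frac{3c}{4}P^2|_{\widetilde{\mathscr H}},\qquad a=\lambda-\frac{c(n-1)}4 .
\]
This invariance is the only point needing care. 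If one used a nonmaximal splitting, $P^2$ would still preserve $\widetilde{\mathscr H}$ by Definition~\ref{def:PTR}, so the argument would still go through.

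\textbf{Trace-free identity \eqref{eq:tracefree}.} The map $A\mapsto A^\circ$ is linear and kills multiples of $I$, since $(aI)^\circ=aI-\frac{qa}{q}I=0$. Applying $\circ$ to the displayed identity therefore gives \eqref{eq:tracefree} directly. The hypothesis $c\ne0$ is not actually needed for this step; it only makes the statement nontrivial.

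\textbf{Norm identity \eqref{eq:anisotropy}.} We take Hilbert--Schmidt norms of both sides of \eqref{eq:tracefree}, which gives
\[
\frac{9c^2}{16}\bigl\|[P^2|_{\widetilde{\mathscr H}}]^\circ\bigr\|_{\mathrm{HS}}^2 .
\]
To evaluate this, diagonalize $P^2|_{\widetilde{\mathscr H}}$ in an orthonormal eigenbasis, which exists because $P^2$ is self-adjoint by Lemma~\ref{lem:P2basic}. The eigenvalues are $-\rho_\alpha^2$ and the trace is $-q\overline{\rho^2}$. Hence $[P^2|_{\widetilde{\mathscr H}}]^\circ$ is diagonal with entries $-(\rho_\alpha^2-\overline{\rho^2})$. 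Its squared HS norm is the sum of the squared entries, $\sum_\alpha(\rho_\alpha^2-\overline{\rho^2})^2$, which yields \eqref{eq:anisotropy}.

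No step is a real obstacle; the invariance check is the one place where the maximal-splitting convention is used.
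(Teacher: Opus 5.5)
Your proposal is correct and follows essentially the same route as the paper: you restrict the reconstruction identity to $\widetilde{\mathscr H}$ as $A=aI+\frac{3c}{4}B$, remove the trace (you via linearity of $A\mapsto A^\circ$, the paper by an explicit trace computation), and evaluate $\|B^\circ\|_{\mathrm{HS}}^2$ in an orthonormal eigenbasis. Your explicit check that $S_f+\mathcal Q_h$ preserves $\widetilde{\mathscr H}$ is left implicit in the paper, and, as you yourself note, it uses only $P(\widetilde{\mathscr H})\subseteq\widetilde{\mathscr H}$ from Definition~\ref{def:PTR}, so the maximal-splitting convention is not actually needed.
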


\begin{proof}
Set
\[
A=(S_f+\mathcal Q_h)|_{\widetilde{\mathscr H}},
\qquad
B=P^2|_{\widetilde{\mathscr H}},
\qquad
a=\lambda-\frac{c(n-1)}4.
\]
Equation \eqref{eq:operator} becomes
\[
A=aI+\frac{3c}{4}B.
\]
Taking the trace on the \(q\)-dimensional space \(\widetilde{\mathscr H}\),
\[
\operatorname{tr}A
=
aq+\frac{3c}{4}\operatorname{tr}B.
\]
Therefore
\begin{align*}
A^\circ
&=
A-\frac{\operatorname{tr}A}{q}I\\
&=
aI+\frac{3c}{4}B
-\left(
a+\frac{3c}{4q}\operatorname{tr}B
\right)I\\
&=
\frac{3c}{4}
\left(
B-\frac{\operatorname{tr}B}{q}I
\right)\\
&=
\frac{3c}{4}B^\circ.
\end{align*}
This proves \eqref{eq:tracefree}.

Now choose an orthonormal eigenbasis
\[
BE_\alpha=-\rho_\alpha^2E_\alpha.
\]
Then
\[
\operatorname{tr}B
=
-\sum_{\alpha=1}^{q}\rho_\alpha^2
=
-q\,\overline{\rho^2}.
\]
Hence
\begin{align*}
B^\circ E_\alpha
&=
\left(
B-\frac{\operatorname{tr}B}{q}I
\right)E_\alpha\\
&=
-\rho_\alpha^2E_\alpha
+\overline{\rho^2}\,E_\alpha\\
&=
-\left(\rho_\alpha^2-\overline{\rho^2}\right)E_\alpha.
\end{align*}
Since \(B^\circ\) is self-adjoint,
\[
\|B^\circ\|_{\mathrm{HS}}^2
=
\sum_{\alpha=1}^{q}
\left(\rho_\alpha^2-\overline{\rho^2}\right)^2.
\]
Finally,
\[
A^\circ=\frac{3c}{4}B^\circ
\]
implies
\[
\|A^\circ\|_{\mathrm{HS}}^2
=
\frac{9c^2}{16}\|B^\circ\|_{\mathrm{HS}}^2,
\]
which is \eqref{eq:anisotropy}.
\end{proof}

\begin{example}[Concrete check on the Segre model]\label{ex:SegreAnisotropy}
On $\widetilde{\mathscr H}=\mathscr D$ in Example~\ref{ex:SegrePTR},
$q=2$ and $P^2|_{\mathscr D}=-I$. Hence
\[
[P^2|_{\mathscr D}]^\circ=0.
\]
At the same time Example~\ref{ex:SegreReconstruction} gives
$(S_f+\mathcal Q_h)|_{\mathscr D}=-5I$, so
\[
[(S_f+\mathcal Q_h)|_{\mathscr D}]^\circ=0.
\]
Thus both sides of \eqref{eq:tracefree} vanish. Moreover
$\rho_1^2=\rho_2^2=1$, so the right-hand side of
\eqref{eq:anisotropy} is also zero. The nonflat Segre model therefore
verifies both forms of Theorem~\ref{thm:anisotropy} directly.

\end{example}

\begin{corollary}[Single-level detection]\label{cor:single}
Assume $c\ne0$ and $q=\dim\widetilde{\mathscr H}>0$. On $\widetilde{\mathscr H}$ the following are equivalent:
\begin{enumerate}[label=\textnormal{(\roman*)}]
\item $(S_f+\mathcal Q_h)|_{\widetilde{\mathscr H}}$ is a scalar multiple of the identity;
\item $[P^2|_{\widetilde{\mathscr H}}]^\circ=0$;
\item $P^2|_{\widetilde{\mathscr H}}=-\rho^2I$ for a pointwise function $\rho$.
\end{enumerate}
Hence the soliton--extrinsic operator detects exactly when the ambiguous distribution has a single pointwise tangency level.
\end{corollary}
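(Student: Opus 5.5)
The plan is to read all three conditions off the trace-free identity \eqref{eq:tracefree} of Theorem~\ref{thm:anisotropy}, writing $A=(S_f+\mathcal Q_h)|_{\widetilde{\mathscr H}}$ and $B=P^2|_{\widetilde{\mathscr H}}$ as in that proof. The one preliminary is an elementary observation: a self-adjoint endomorphism $T$ of the $q$-dimensional space $\widetilde{\mathscr H}$ is a scalar multiple of the identity exactly when $T^\circ=0$. Indeed, if $T=\mu I$ then $\operatorname{tr}T=q\mu$, hence $T^\circ=0$. Conversely, $T^\circ=0$ means $T=(\operatorname{tr}T/q)I$. Here $q>0$ is used so that $A^\circ$ and $B^\circ$ are defined.

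For (i)$\Leftrightarrow$(ii), I would invoke \eqref{eq:tracefree}, namely $A^\circ=\frac{3c}{4}B^\circ$. Since $c\ne0$, the factor $\frac{3c}{4}$ is nonzero, so $A^\circ=0$ if and only if $B^\circ=0$. By the preliminary observation, $A^\circ=0$ is precisely condition (i), and $B^\circ=0$ is condition (ii).

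For (ii)$\Leftrightarrow$(iii), the preliminary observation applied to $B$ shows that $B^\circ=0$ if and only if $B=\mu I$ with $\mu=\operatorname{tr}B/q$. By Lemma~\ref{lem:P2basic}, $B$ is nonpositive, so $\mu\le0$ and we may write $\mu=-\rho^2$ with $\rho=\sqrt{-\mu}\ge0$. The same lemma, or the bound $\|P\|^2\le q$, gives $\rho^2\le1$. The identity $\rho^2=-\operatorname{tr}B/q=\overline{\rho^2}$ shows that $\rho$ is a well-defined pointwise function. The converse, (iii)$\Rightarrow$(ii), is immediate. Alternatively, (ii)$\Leftrightarrow$(iii) also follows from \eqref{eq:anisotropy}: $\|B^\circ\|^2_{\mathrm{HS}}=\sum_\alpha(\rho_\alpha^2-\overline{\rho^2})^2$ vanishes exactly when all $\rho_\alpha^2$ coincide.

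No step is a real obstacle. The only point needing care is that (iii) asks only for a pointwise function $\rho$, with no smoothness or constancy required, so the pointwise algebra above suffices. With the maximal convention of Remark~\ref{rem:canonicalPTR} one moreover gets $\rho>0$, though this is not needed. The final sentence of the corollary restates (i)$\Leftrightarrow$(iii).
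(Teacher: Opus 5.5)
Your proposal is correct and follows the paper's own proof essentially step for step. You get (i)$\Leftrightarrow$(ii) from the trace-free identity \eqref{eq:tracefree} with $c\ne0$, and (ii)$\Leftrightarrow$(iii) from the nonpositivity of $P^2$ in Lemma~\ref{lem:P2basic}, which lets the scalar be written as $-\rho^2$; your extra remarks ($\rho^2\le1$, $\rho^2=\overline{\rho^2}$, and the alternative route via \eqref{eq:anisotropy}) are harmless but not needed.
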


\begin{proof}
If (i) holds, its trace-free part vanishes. Equation~\eqref{eq:tracefree} and $c\ne0$ imply (ii). Conversely, (ii) and \eqref{eq:tracefree} imply that the trace-free part of $(S_f+\mathcal Q_h)|_{\widetilde{\mathscr H}}$ vanishes, which is equivalent to (i).

Condition (ii) means
\[
P^2|_{\widetilde{\mathscr H}}
=
\frac{\operatorname{tr}(P^2|_{\widetilde{\mathscr H}})}q I.
\]
Since $P^2$ is nonpositive, the scalar can be written $-\rho^2$, proving (iii). Conversely, if $P^2|_{\widetilde{\mathscr H}}=-\rho^2I$, then it is scalar and hence its trace-free part vanishes.
\end{proof}

\begin{remark}[Single-level detection on the Segre model]
For Example~\ref{ex:SegrePTR}, on
$\widetilde{\mathscr H}=\mathscr D$ one has simultaneously
\[
P^2|_{\mathscr D}=-I,
\qquad
[P^2|_{\mathscr D}]^\circ=0,
\qquad
(S_f+\mathcal Q_h)|_{\mathscr D}=-5I.
\]
Thus conditions (i), (ii), and (iii) of Corollary~\ref{cor:single} all hold,
with the single tangency level $\rho^2=1$. A strictly intermediate
pointwise slant ambiguous block would instead have $0<\rho^2<1$. The
totally real projective form has $q=0$, so the trace-free criterion is not
applicable to it.
\end{remark}

\begin{corollary}[Multi-level anisotropy]\label{cor:multilevel}
If $c\ne0$ and $P^2|_{\widetilde{\mathscr H}}$ has at least two distinct eigenvalues, then
\[
S_f|_{\widetilde{\mathscr H}}
\quad\text{and}\quad
\mathcal Q_h|_{\widetilde{\mathscr H}}
\]
cannot both be scalar multiples of the identity.
\end{corollary}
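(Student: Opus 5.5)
The plan is a short contradiction argument resting on the trace-free identity \eqref{eq:tracefree} of Theorem~\ref{thm:anisotropy}. The hypothesis that $P^2|_{\widetilde{\mathscr H}}$ has two distinct eigenvalues forces $\widetilde{\mathscr H}\ne0$, so $q>0$. Together with $c\ne0$, this puts us exactly in the setting of Theorem~\ref{thm:anisotropy}, and no further hypotheses are needed. The whole proof is pointwise, so I fix a point $p$ at which two distinct levels $\rho_i^2\ne\rho_j^2$ occur.

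Suppose, for contradiction, that $S_f|_{\widetilde{\mathscr H}}=\mu I$ and $\mathcal Q_h|_{\widetilde{\mathscr H}}=\nu I$ at $p$ for some scalars $\mu,\nu$. One point needs care here. The restrictions are understood as the compressions of $S_f$ and $\mathcal Q_h$ to $\widetilde{\mathscr H}$, that is, their $\widetilde{\mathscr H}$-blocks. This is the same sense in which $(S_f+\mathcal Q_h)|_{\widetilde{\mathscr H}}$ appears in Theorem~\ref{thm:anisotropy}. In fact, by \eqref{eq:operator} the sum $S_f+\mathcal Q_h$ preserves $\widetilde{\mathscr H}$, since $P^2$ does, so there is no ambiguity for the sum. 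It follows that $(S_f+\mathcal Q_h)|_{\widetilde{\mathscr H}}=(\mu+\nu)I$, and its trace-free part vanishes.

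By \eqref{eq:tracefree} and $c\ne0$, we then get $[P^2|_{\widetilde{\mathscr H}}]^\circ=0$. Equivalently, by Corollary~\ref{cor:single}, $P^2|_{\widetilde{\mathscr H}}=-\rho^2I$ with a single level. This contradicts the existence of two distinct eigenvalues. One could also argue quantitatively: the right-hand side of \eqref{eq:anisotropy} is strictly positive when two of the $\rho_\alpha^2$ differ, since they cannot all equal $\overline{\rho^2}$. The left-hand side, however, is zero.

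The argument is essentially immediate, and the only delicate point is the one already noted: stating that the scalar hypothesis applies to the $\widetilde{\mathscr H}$-blocks of $S_f$ and $\mathcal Q_h$, so that their sum is the operator appearing in Theorem~\ref{thm:anisotropy}. A stronger consequence follows from the same argument: it is the sum $S_f+\mathcal Q_h$ on $\widetilde{\mathscr H}$ that cannot be scalar, and in particular $S_f|_{\widetilde{\mathscr H}}$ is non-scalar whenever $\mathcal Q_h|_{\widetilde{\mathscr H}}$ is scalar, and conversely.
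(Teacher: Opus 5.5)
Your proof is correct and follows the paper's argument: if both $\widetilde{\mathscr H}$-blocks are scalar, then $(S_f+\mathcal Q_h)|_{\widetilde{\mathscr H}}$ is scalar, and the trace-free identity (equivalently Corollary~\ref{cor:single}) forces a single tangency level, which contradicts the hypothesis. Your added remark is also correct and a bit more careful than the paper: since $S_f+\mathcal Q_h=aI+\frac{3c}{4}P^2$ preserves $\widetilde{\mathscr H}$, reading the hypothesis as a statement about compressions causes no ambiguity.
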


\begin{proof}
Suppose, to the contrary, that
\[
S_f|_{\widetilde{\mathscr H}}=\mu I,
\qquad
\mathcal Q_h|_{\widetilde{\mathscr H}}=\kappa I.
\]
Then
\[
(S_f+\mathcal Q_h)|_{\widetilde{\mathscr H}}
=
(\mu+\kappa)I.
\]
By Corollary~\ref{cor:single},
\[
P^2|_{\widetilde{\mathscr H}}=-\rho^2I.
\]
Thus \(P^2|_{\widetilde{\mathscr H}}\) has only one eigenvalue, contradicting the hypothesis that at least two distinct eigenvalues occur.
\end{proof}

\begin{example}[Algebraic multi-level illustration]\label{ex:algebraicMultilevel}
To respect the even-multiplicity property from Lemma~\ref{lem:P2basic}, take
$q=4$, $c=4$, and suppose at one point that the squared tangency levels are
\[
\rho_1^2=\rho_2^2=1,
\qquad
\rho_3^2=\rho_4^2=\frac14.
\]
Then $\overline{\rho^2}=5/8$ and
\[
\sum_{\alpha=1}^{4}
(\rho_\alpha^2-\overline{\rho^2})^2
=4\left(\frac38\right)^2
=\frac9{16}.
\]
Theorem~\ref{thm:anisotropy} therefore gives
\[
\left\|
[(S_f+\mathcal Q_h)|_{\widetilde{\mathscr H}}]^\circ
\right\|_{\mathrm{HS}}^2
=9\cdot\frac9{16}=\frac{81}{16}>0.
\]
Consequently $S_f|_{\widetilde{\mathscr H}}$ and
$\mathcal Q_h|_{\widetilde{\mathscr H}}$ cannot both be scalar, exactly
as Corollary~\ref{cor:multilevel} asserts. This is an algebraic
illustration only; no nonflat geometric realization with these two levels is
being asserted.
\end{example}

For any tangent-bundle endomorphism $A$, we use the standard induced
covariant derivative
\[
(\nabla_XA)Y:=\nabla_X(AY)-A(\nabla_XY).
\]
In particular, this convention applies to $P^2$, $S_f$, and $\mathcal Q_h$.

\begin{proposition}[Differential soliton constraint]\label{prop:differential}
For every gradient Ricci soliton considered above,
\begin{align}\label{eq:differential}
R(X,Y)\nabla f
={}&
\frac{3c}{4}
\bigl((\nabla_XP^2)Y-(\nabla_YP^2)X\bigr) \notag\\
&-
\bigl((\nabla_X\mathcal Q_h)Y-(\nabla_Y\mathcal Q_h)X\bigr).
\end{align}
\end{proposition}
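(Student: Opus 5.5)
The plan is to differentiate the operator identity \eqref{eq:operator} of Theorem~\ref{thm:reconstruction} and combine it with the standard Ricci identity for the Hessian endomorphism. Since $S_f$ is the Hessian endomorphism, $S_fY=\nabla_Y\nabla f$. Hence, with the curvature convention $R(X,Y)=\nabla_X\nabla_Y-\nabla_Y\nabla_X-\nabla_{[X,Y]}$, torsion-freeness gives
\[
R(X,Y)\nabla f=(\nabla_XS_f)Y-(\nabla_YS_f)X.
\]
This is the first step: a direct expansion of $\nabla_X(S_fY)-S_f(\nabla_XY)$ and the same expression with $X,Y$ swapped, using $\nabla_XY-\nabla_YX=[X,Y]$.

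Next, write \eqref{eq:operator} as
\[
S_f=aI+\frac{3c}{4}P^2-\mathcal Q_h,\qquad a=\lambda-\frac{c(n-1)}4 .
\]
Because $\lambda$ and $c$ are constants, $a$ is constant, and the identity has $\nabla I=0$. Taking the covariant derivative with the induced convention stated before the proposition gives
\[
(\nabla_XS_f)Y=\frac{3c}{4}(\nabla_XP^2)Y-(\nabla_X\mathcal Q_h)Y .
\]
Antisymmetrizing in $X,Y$ and substituting into the Ricci identity from the first step yields \eqref{eq:differential} directly.

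The only point requiring care is the sign convention for $R$. I must confirm that the convention implicit in Lemma~\ref{lem:Ricci}, where $\operatorname{Ric}(X,Y)=\sum_i g(R(e_i,X)Y,e_i)$ is a positive Ricci curvature on spheres, matches $R(X,Y)=[\nabla_X,\nabla_Y]-\nabla_{[X,Y]}$. It does, since then $g(R(e_i,X)X,e_i)$ is the sectional curvature. With the opposite convention, the left-hand side of \eqref{eq:differential} would change sign, so I would state this convention explicitly.

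No other obstacles arise. In particular, the Codazzi equation is not needed, because $\nabla\mathcal Q_h$ and $\nabla P^2$ are kept as they are.
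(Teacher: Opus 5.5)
Your proposal is correct and follows the same route as the paper: the Ricci identity $R(X,Y)\nabla f=(\nabla_XS_f)Y-(\nabla_YS_f)X$, combined with differentiating $S_f=aI+\frac{3c}{4}P^2-\mathcal Q_h$ (with $a$ constant) and antisymmetrizing in $X,Y$. Your explicit check of the curvature sign convention against Lemma~\ref{lem:Ricci} is a worthwhile detail that the paper leaves implicit.
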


\begin{proof}
For the Hessian endomorphism \(S_fX=\nabla_X\nabla f\), the Ricci identity gives
\[
R(X,Y)\nabla f
=
(\nabla_XS_f)Y-(\nabla_YS_f)X.
\]
From \eqref{eq:operator},
\[
S_f
=
aI+\frac{3c}{4}P^2-\mathcal Q_h,
\qquad
a=\lambda-\frac{c(n-1)}4.
\]
The number \(a\) is constant, so differentiating this identity and
antisymmetrizing in \(X,Y\) gives \eqref{eq:differential}.
\end{proof}

\begin{corollary}[Parallel data and compact triviality]\label{cor:nullity}
If
\[
\nabla P^2=0,
\qquad
\nabla\mathcal Q_h=0,
\]
then
\[
R(X,Y)\nabla f=0
\]
for all \(X,Y\), so \(\nabla f\) lies in the curvature-nullity
distribution. If the curvature-nullity distribution is zero at every point,
then $\nabla f=0$ and hence $f$ is constant and the soliton is Einstein,
without any compactness assumption. If instead $N$ is compact and connected,
then the same conclusion follows without assuming zero curvature nullity. In
particular, on a compact connected
$N$, the stronger geometric assumptions
\[
\nabla P^2=0,
\qquad
\nabla h=0
\]
also force the soliton to be trivial.
\end{corollary}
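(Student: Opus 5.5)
The plan is to work directly from Proposition~\ref{prop:differential} and the operator identity \eqref{eq:operator}, treating the three assertions in order.

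\textbf{Curvature nullity.} Under $\nabla P^2=0$ and $\nabla\mathcal Q_h=0$, both antisymmetrized brackets on the right-hand side of \eqref{eq:differential} vanish. Hence $R(X,Y)\nabla f=0$ for all $X,Y$. By definition this says that $\nabla f$ lies pointwise in the curvature-nullity space
\[
\{Z:\ R(X,Y)Z=0\ \text{for all }X,Y\}.
\]
If that space is zero at every point, then $\nabla f\equiv0$. Thus $f$ is locally constant, $S_f=0$, and \eqref{eq:soliton} reduces to $\operatorname{Ric}=\lambda g$. No compactness is used in this step.

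\textbf{Compact case.} I would not try to extract $\nabla f=0$ from the curvature equation here. Instead I would use that
\[
S_f=aI+\tfrac{3c}{4}P^2-\mathcal Q_h,\qquad a=\lambda-\tfrac{c(n-1)}4\ \text{constant},
\]
is itself parallel under the hypotheses. Taking traces, $\Delta f=\operatorname{tr}S_f$ is a constant function, since the trace of a parallel endomorphism is constant. The argument of Theorem~\ref{thm:constantPhi} then applies: integrating over closed $N$ gives $\int_N\Delta f\,dV=0$, so $\Delta f\equiv0$. Hence $f$ is harmonic on a compact connected manifold, therefore constant, and the soliton is Einstein. Equivalently, one can check that $\Phi$ is constant, since $\|P\|^2=-\operatorname{tr}P^2$ and $n^2\|H\|^2-\|h\|^2=\operatorname{tr}\mathcal Q_h$ are traces of parallel operators, and then invoke Theorem~\ref{thm:constantPhi} directly.

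\textbf{Parallel second fundamental form.} It remains to show that $\nabla h=0$ implies $\nabla\mathcal Q_h=0$, where $\nabla h$ is taken with respect to the van der Waerden--Bortolotti connection. I expect this to be the only step needing care, though it is routine.
\begin{enumerate}[label=\textnormal{(\alph*)}]
\item Parallelism of $h$ gives parallelism of its trace, so $\nabla^\perp H=0$.
\item Write $g(\mathcal Q_hX,Y)$ as a complete contraction of $h\otimes h$ and of $H\otimes h$, using the tangent metric and the normal metric.
\item Differentiate along $Z$ at a point, choosing a frame with $\nabla e_i=0$ there. The derivative is a sum of terms each containing $\nabla h$ or $\nabla^\perp H$, because both metrics are parallel.
\item Hence $(\nabla_Z\mathcal Q_h)=0$, and the previous compact argument applies to give triviality.
\end{enumerate}
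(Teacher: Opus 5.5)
Your proposal is correct. The curvature-nullity step and the $\nabla h=0$ step match the paper, but your compact-case argument takes a different route.

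The paper stays with the curvature equation. It contracts $R(X,Y)\nabla f=0$ to $\operatorname{Ric}(\nabla f,\cdot)=0$, then uses the soliton identity $\tfrac12\nabla\operatorname{Scal}_N=\operatorname{Ric}(\nabla f,\cdot)$ from the contracted Bianchi identity. This gives constant scalar curvature, hence constant $\Delta f$, and integration finishes the argument.

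You instead observe that \eqref{eq:operator} makes $S_f$ itself parallel, so $\Delta f=\operatorname{tr}S_f$ is constant directly. Equivalently, $\Phi=-\tfrac{3c}{4}\operatorname{tr}P^2+\operatorname{tr}\mathcal Q_h$ is constant and Theorem~\ref{thm:constantPhi} applies verbatim.

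What your route buys: it is shorter and needs neither Proposition~\ref{prop:differential} nor the Bianchi identity. It also only uses constancy of $\operatorname{tr}P^2$ and $\operatorname{tr}\mathcal Q_h$, so it shows that the compact statement is a special case of Theorem~\ref{thm:constantPhi}.

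What the paper's route buys: the compact conclusion follows from the curvature-nullity condition alone, indeed from $\nabla f\in\ker\operatorname{Ric}$. This ties the compact alternative to the first assertion, and it would still work if one knew only $R(X,Y)\nabla f=0$ rather than full parallelism of $P^2$ and $\mathcal Q_h$.
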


\begin{proof}
The curvature-nullity assertion follows directly from
Proposition~\ref{prop:differential}. If the curvature-nullity distribution is
zero at every point, then the inclusion $\nabla f$ in that distribution forces
$\nabla f=0$, so $f$ is constant and the soliton equation reduces to
$\operatorname{Ric}=\lambda g$; no compactness is needed for this conclusion.
For the compact alternative, contract $R(X,Y)\nabla f=0$ in the first and
fourth slots with an orthonormal frame
$\{e_i\}$:
\[
\operatorname{Ric}(Y,\nabla f)
=\sum_i g(R(e_i,Y)\nabla f,e_i)=0.
\]
For a gradient Ricci soliton satisfying
$\operatorname{Ric}+\operatorname{Hess}f=\lambda g$, the contracted Bianchi
identity gives the standard scalar-curvature identity
\[
\frac12\,\nabla\operatorname{Scal}_N
=\operatorname{Ric}(\nabla f,\cdot).
\]
Hence $\operatorname{Scal}_N$ is constant. Taking the trace of the soliton
equation,
\[
\operatorname{Scal}_N+\Delta f=n\lambda,
\]
shows that $\Delta f$ is constant as well. If $N$ is compact, integration
gives
\[
0=\int_N\Delta f\,dV
=(\Delta f)\operatorname{Vol}(N),
\]
so $\Delta f=0$. Compactness and connectedness then imply that the harmonic
function $f$ is constant. The soliton equation reduces to
$\operatorname{Ric}=\lambda g$.

Finally, $\nabla h=0$ implies that $H$ and all contractions entering
$\mathcal Q_h$ are parallel, hence $\nabla\mathcal Q_h=0$. This proves the
last assertion.
\end{proof}

\section{Geometric consequences, comparison, and scope}

The main compact consequences are now constraint and triviality results rather
than consequences of the trace estimate alone.  Theorem~\ref{thm:maxbound}
uses the maximum principle to control the soliton constant pointwise, while
Theorem~\ref{thm:constantPhi} shows that constancy of a natural scalar
combination of tangency and extrinsic data forces the soliton to be Einstein.
The reconstruction and anisotropy results of Section~\ref{sec:spectral} then
describe how the remaining nonflat, potentially nontrivial cases must encode
their PTR tangency spectrum.

The flat model of Example~\ref{ex:flatPTR} has nonconstant potential and
several nonzero tangency levels, but \(c=0\), so the nonflat reconstruction
mechanism disappears.  Example~\ref{ex:SegrePTR} is nonflat and
non-totally-geodesic, but its potential is constant; by
Example~\ref{ex:SegrePhi}, this triviality is exactly what
Theorem~\ref{thm:constantPhi} predicts. In a different noncompact and
homogeneous direction, Cidre-D\'iaz--Sanmart\'in-L\'opez
\cite{CidreDiazSanmartinLopez2026} classify Ricci-soliton Lie subgroups in
complex hyperbolic spaces. Their examples are not PTR gradient-soliton
models of the type sought here, so they serve as a nearby comparison rather
than as existence examples for the present theory.

\begin{proposition}[Nonexistence on compact real hypersurfaces]\label{prop:realhyp}
Let \(m\ge2\), and let \(M^{2m-1}\) be a compact real hypersurface of a
nonflat complex space form.  Denote its shape operator by \(A\).  If
\begin{equation}\label{eq:shapecombination}
(\operatorname{tr}A)^2-\operatorname{tr}(A^2)
\end{equation}
is constant, then \(M\) admits no gradient Ricci soliton.  In particular,
compact real hypersurfaces with constant principal curvatures, including the
standard homogeneous families, admit no gradient Ricci soliton.
\end{proposition}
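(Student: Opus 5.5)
The plan is to place compact real hypersurfaces inside the PTR framework, check that the function $\Phi$ of Theorem~\ref{thm:constantPhi} is constant exactly under hypothesis \eqref{eq:shapecombination}, and conclude that any gradient Ricci soliton on $M$ would have to be Einstein. That would contradict the classical nonexistence of Einstein real hypersurfaces in nonflat complex space forms. Since every soliton, trivial or not, would be forced to be Einstein, the statement ``admits no gradient Ricci soliton'' is meant literally.

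\emph{Step 1: the PTR structure.} Let $\nu$ be a local unit normal and put $\xi=-J\nu$, which is tangent. For tangent $X$ we have $JX=PX+\eta(X)\nu$ with $\eta(X)=g(X,\xi)$, and the standard relation is $P^2=-I+\eta\otimes\xi$. Hence $\ker P=\operatorname{span}\{\xi\}$ is a rank-one totally real distribution, and $\operatorname{Im}P=\xi^\perp$ has rank $2m-2$ everywhere. The maximal splitting of Remark~\ref{rem:canonicalPTR} therefore applies globally: $M$ is a PTR-submanifold with $q=2m-2$. Moreover $\|P\|^2=-\operatorname{tr}P^2=2m-2$ is constant, and this step uses only the hypersurface structure.

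\emph{Step 2: $\Phi$ is constant.} With $n=2m-1$ and $h(X,Y)=g(AX,Y)\nu$, we get $nH=(\operatorname{tr}A)\nu$ and $\|h\|^2=\operatorname{tr}(A^2)$. Therefore
\[
\Phi=\frac{3c}{4}(2m-2)+(\operatorname{tr}A)^2-\operatorname{tr}(A^2),
\]
which is constant by \eqref{eq:shapecombination}. Apply Theorem~\ref{thm:constantPhi} on each connected component of $M$; each component is compact. It follows that any gradient Ricci soliton on $M$ has constant $f$ and that $\operatorname{Ric}=\lambda g$. So $M$ would be an Einstein real hypersurface of $\widetilde M(c)$ with $c\neq0$.

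\emph{Step 3: the contradiction.} Invoke the classical theorem that a nonflat complex space form of complex dimension $m\ge2$ admits no Einstein real hypersurface. For $\mathbb{CP}^m$ this is due to Kon and Cecil--Ryan, for $\mathbb{CH}^m$ to Montiel, and the case $m=2$ is covered separately, for instance by Kim--Ryan. As a self-contained check, one can use Lemma~\ref{lem:Ricci}, which gives
\[
\operatorname{Ric}=\tfrac c4\{(2m+1)I-3\eta\otimes\xi\}+(\operatorname{tr}A)A-A^2,
\]
and test the Einstein condition against $\xi$ and $\xi^\perp$ to recover the usual contradiction. This is also where compactness helps: it excludes noncompact pieces for which local Einstein-type exceptions might otherwise need to be ruled out.

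For the ``in particular'' clause, constant principal curvatures make $\operatorname{tr}A$ and $\operatorname{tr}(A^2)$ constant, so \eqref{eq:shapecombination} holds automatically. The main obstacle is Step 3. The paper has not stated the Einstein nonexistence result, so it must be cited carefully, especially for $m=2$ and for $c<0$. If a short direct proof is wanted, I would try evaluating the displayed Ricci formula on $\xi$ and on $\xi^\perp$ and comparing the two eigenvalue conditions.
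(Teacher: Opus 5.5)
Your proposal is correct and follows the paper's proof: you identify $P^2=-I+\eta\otimes\xi$ and $\|P\|^2=2m-2$, rewrite $\Phi$ of Theorem~\ref{thm:constantPhi} as $\frac{3c}{4}(2m-2)+(\operatorname{tr}A)^2-\operatorname{tr}(A^2)$, force any soliton to be Einstein, and contradict the classical nonexistence of Einstein real hypersurfaces; applying Theorem~\ref{thm:constantPhi} componentwise also usefully supplies the connectedness hypothesis that the paper leaves implicit. Two side remarks are off, though harmless to the main argument: the Einstein nonexistence theorem is local, so compactness is needed only for Theorem~\ref{thm:constantPhi}; and testing the Ricci formula on $\xi$ and $\xi^\perp$ pointwise gives no contradiction by itself (for example $A=\operatorname{diag}(4,1,1)$ with $c=4$ is pointwise Einstein-compatible), because the classical proof needs the Codazzi equation.
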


\begin{proof}
For a real hypersurface with unit normal \(\nu\), put \(\xi=-J\nu\) and define \(\eta(X)=g(X,\xi)\). Then
\[
P^2=-I+\eta\otimes\xi,
\qquad
\|P\|^2=2m-2.
\]
If \(n=2m-1\), then
\[
H=\frac{\operatorname{tr}A}{n}\nu,
\qquad
\|h\|^2=\operatorname{tr}(A^2).
\]
Hence the quantity in Theorem~\ref{thm:constantPhi} is
\[
\Phi
=
\frac{3c}{4}(2m-2)
+
(\operatorname{tr}A)^2
-
\operatorname{tr}(A^2).
\]
Under the hypothesis, \(\Phi\) is constant.  If a gradient Ricci soliton
existed, Theorem~\ref{thm:constantPhi} would force \(f\) to be constant and
\(M\) to be Einstein.  However, real hypersurfaces of nonflat complex space
forms are not Einstein for \(m\ge2\); see
Cecil--Ryan \cite[Theorem~8.69]{CecilRyan2015}.  This obstruction is also
used in the Ricci-soliton setting by Cho--Kimura \cite{ChoKimura2009}.
For the classical projective and hyperbolic results, see also Cecil--Ryan
\cite{CecilRyan1982}, Montiel \cite{Montiel1985}, and Kon \cite{Kon1979}.  This contradiction proves
the nonexistence.
\end{proof}

\begin{corollary}[Necessary variation for compact real hypersurfaces]\label{cor:realhypvariation}
If a compact real hypersurface \(M^{2m-1}\), \(m\ge2\), of a nonflat complex
space form carries a gradient Ricci soliton, then
\[
(\operatorname{tr}A)^2-\operatorname{tr}(A^2)
\]
must be nonconstant.
\end{corollary}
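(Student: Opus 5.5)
This is the contrapositive of Proposition~\ref{prop:realhyp}, so the plan is to argue by contradiction and reuse that proof.

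Suppose a compact real hypersurface \(M^{2m-1}\), \(m\ge2\), of a nonflat complex space form \(\widetilde M(c)\) carries a gradient Ricci soliton. Suppose also that \((\operatorname{tr}A)^2-\operatorname{tr}(A^2)\) is constant. Proposition~\ref{prop:realhyp} then says that no gradient Ricci soliton can exist on \(M\), which is a contradiction. Hence the combination must be nonconstant.

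If we unpack the argument, there are three steps.

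First, view \(M\) as a PTR-submanifold. Here \(\mathscr H^\perp=\operatorname{span}\{\xi\}=\ker P\) and \(\widetilde{\mathscr H}=\xi^\perp\), with \(P^2=-I+\eta\otimes\xi\). This gives \(\|P\|^2=2m-2\), which is constant.

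Second, compute the remaining terms of \(\Phi\). Since \(n^2\|H\|^2=(\operatorname{tr}A)^2\) and \(\|h\|^2=\operatorname{tr}(A^2)\), we get
\[
\Phi=\frac{3c}{4}(2m-2)+(\operatorname{tr}A)^2-\operatorname{tr}(A^2).
\]
Thus \(\Phi\) is constant if and only if the shape combination is constant.

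Third, apply Theorem~\ref{thm:constantPhi}. It forces \(f\) to be constant and \(M\) to be Einstein. This contradicts the standard fact (Cecil--Ryan, Theorem~8.69) that real hypersurfaces of nonflat complex space forms with \(m\ge2\) are never Einstein.

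Compactness of \(M\) is needed in Theorem~\ref{thm:constantPhi}. Connectedness can be assumed component by component: the argument applies to any component on which the combination is constant. If a component of \(M\) were not connected, the conclusion would still hold on each component separately.

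The only nontrivial input is the non-Einstein theorem, which is quoted from the literature. Everything else is already in the proof of Proposition~\ref{prop:realhyp}, so I do not expect any real obstacle.
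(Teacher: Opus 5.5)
Your proposal is correct and follows the paper's argument: the corollary is the contrapositive of Proposition~\ref{prop:realhyp}, whose proof computes $\Phi=\frac{3c}{4}(2m-2)+(\operatorname{tr}A)^2-\operatorname{tr}(A^2)$, applies Theorem~\ref{thm:constantPhi}, and contradicts the Cecil--Ryan non-Einstein theorem. The one slip is your sentence about a component of $M$ being disconnected, which is vacuous since components are connected; the intended point is simply that constancy on $M$ makes $\Phi$ constant on each compact connected component, and Theorem~\ref{thm:constantPhi} applies to each component separately.
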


\begin{proof}
This is the contrapositive of Proposition~\ref{prop:realhyp}.
\end{proof}

\begin{remark}
For a real hypersurface,
\[
\mathcal Q_h=(\operatorname{tr}A)A-A^2,
\]
and \eqref{eq:operator} becomes
\[
S_f=
\left(\lambda-\frac{c(n-1)}4\right)I
-\frac{3c}{4}(I-\eta\otimes\xi)
-(\operatorname{tr}A)A+A^2.
\]
Thus the general PTR constraint becomes an explicit shape-operator equation.
\end{remark}

Proposition~\ref{prop:differential} supplies a differential consequence of the
gradient structure that is not contained in the pointwise spectral
reconstruction alone.  It is consistent with the general rigidity phenomena
for gradient Ricci solitons discussed by Petersen--Wylie
\cite{PetersenWylie2009b}, while here the restrictions are expressed directly
through \(P^2\) and the extrinsic operator \(\mathcal Q_h\).

The results therefore identify both a constraint mechanism and a bridge from gradient-soliton data to PTR tangency data:
\[
(S_f,\mathcal Q_h)\quad\Longrightarrow\quad P^2
\quad\Longrightarrow\quad
\text{tangency spectrum and its anisotropy}.
\]
This bridge is effective only in nonzero ambient holomorphic sectional
curvature.  Example~\ref{ex:SegrePTR} shows that a nonflat,
non-totally-geodesic PTR embedding of CR type can carry a gradient Ricci soliton, but that
example is Einstein with constant potential and therefore does not resolve the
nontrivial existence problem.  At present we know no nonflat example in this
PTR setting with nonconstant potential, and in particular none whose
ambiguous distribution carries several distinct nonzero tangency levels.
Theorems~\ref{thm:maxbound} and \ref{thm:constantPhi}, together with
Proposition~\ref{prop:differential}, substantially restrict where such an
example could occur.

\end{document}